\newcommand{\white}[1]{ {\color[rgb]{1,1,1} #1 } }
\newtheorem{thm}{Theorem}[section]
\newtheorem{conj}[thm]{Conjecture}
\newtheorem{lem}[thm]{Lemma}
\newtheorem{prop}[thm]{Proposition}
\newtheorem{defn}[thm]{Definition}
\numberwithin{equation}{section}
\newcommand{\tensor}{\otimes}
\newcommand{\con}{\equiv}
\newcommand{\isom}{\cong}
\newcommand{\surj}{\twoheadrightarrow}
\newcommand{\inj}{\hookrightarrow}
\newcommand{\s}{\sigma}
\newcommand{\e}{\varepsilon}
\newcommand{\g}{\gamma}
\newcommand{\R}{\mathbb{R}}
\newcommand{\Z}{\mathbb{Z}}
\newcommand{\Q}{\mathbb{Q}}
\newcommand{\C}{\mathbb{C}}
\DeclareMathOperator{\Heis}{H}
\renewcommand{\H}{\Heis}
\renewcommand{\aa}{\mathfrak{a}}
\newcommand{\cc}{\mathfrak{c}}
\newcommand{\OO}{\mathcal{O}}
\DeclareMathOperator{\Gal}{Gal}
\DeclareMathOperator{\GL}{GL}
\DeclareMathOperator{\U}{U}
\DeclareMathOperator{\M}{M}
\DeclareMathOperator{\Tr}{Tr}
\DeclareMathOperator{\Real}{Re}
\renewcommand{\Re}{\Real}
\newcommand{\ds}{\displaystyle}
\newcommand{\ol}{\overline}
\newcommand{\comment}[1]{}
\newcommand{\bi}{\begin{itemize}}
\newcommand{\ei}{\end{itemize}}
\newcommand{\ben}{\begin{enumerate}}
\newcommand{\een}{\end{enumerate}}
\newcommand{\be}{\begin{equation}}
\newcommand{\ee}{\end{equation}}
\newcommand{\bea}{\begin{eqnarray}}
\newcommand{\eea}{\end{eqnarray}}
\newcommand{\bal}{\begin{align}}
\newcommand{\eal}{\end{align}}
\newcommand{\ba}{\begin{array}}
\newcommand{\ea}{\end{array}}
\newcommand{\nn}{\nonumber}
\newcommand{\Mod}[1]{\,\,\left(\operatorname{mod}\, #1\right)}
\newcommand{\abs}[1]{\left| #1 \right|}
\DeclareFontFamily{U}{wncy}{}
\DeclareFontShape{U}{wncy}{m}{n}{<->wncyr10}{}
\DeclareSymbolFont{mcy}{U}{wncy}{m}{n}
\DeclareMathSymbol{\Sha}{\mathord}{mcy}{"58}
\DeclareMathOperator{\Cl}{Cl}
\DeclareMathOperator{\PEC}{PEC}
\DeclareMathOperator{\EC}{EC}
\DeclareMathOperator{\EU}{EU}
\newcommand{\fp}{\mathfrak{p}}
\DeclareMathOperator{\Art}{Art}
\newcommand{\mm}{\mathfrak{m}}
\newcommand{\ct}{\dagger}
\begin{document}

\title[SIC-POVMs and the Stark conjectures]{SIC-POVMs and the Stark conjectures}
\author{Gene S. Kopp}
\address{School of Mathematics, University of Bristol, Bristol, UK and Heilbronn Institute for Mathematical Research, Bristol, UK}
\email{gene.kopp@bristol.ac.uk}

\subjclass[2010]{11R37, 11R42, 81P15, 81R05}

\keywords{SIC-POVM, complex equiangular lines, quantum measurement, Stark conjectures, zeta function, Hecke L-function, class field theory, real quadratic field, Hilbert's 12th problem, Heisenberg group, Clifford group, indefinite zeta function}

\date{\today}

\thanks{This work was partially supported by NSF grant DMS-1401224, NSF grant DMS-1701576, and NSF RTG grant 0943832.}

\begin{abstract} 
The existence of a set of $d^2$ pairwise equiangular complex lines (equivalently, a SIC-POVM) in $d$-dimensional Hilbert space is currently known only for a finite set of dimensions $d$.  We prove that, if there exists a set of real units in a certain ray class field (depending on $d$) satisfying certain congruence conditions and algebraic properties, a SIC-POVM may be constructed when $d$ is an odd prime congruent to 2 modulo 3. We give an explicit analytic formula that we expect to yield such a set of units.
Our construction uses values of derivatives of zeta functions at $s=0$ and is closely connected to the Stark conjectures over real quadratic fields.

We verify numerically that our construction yields SIC-POVMs in dimensions 5, 11, 17, and 23, and we give the first exact solution to the SIC-POVM problem in dimension 23.
\end{abstract}

\maketitle


\section{Introduction}

A set of $m$ \textit{complex equiangular lines} in $d$ dimensions is a set of one-dimensional subspaces $\C v_1,\C v_2,\ldots, \C v_{m}$ in $\C^d$ having equal angles $\arccos\left(\frac{\abs{\langle v_i, v_j \rangle}}{\parallel v_i \parallel \cdot \parallel v_j \parallel}\right) = \theta$ for all $i \neq j$. These configurations were first studied in the context of design theory in the 1970s. The maximal cardinality of a set of complex equiangular lines was shown to be bounded above by $d^2$ by Delsarte, Goethals, and Seidel \cite{delsarte}.

A set of complex equiangular lines achieving this upper bound---$d^2$ lines in dimension $d$---is equivalent to a set of quantum measurements known as a SIC-POVM (symmetric informationally complete positive operator-valued measure).
SIC-POVMs were introduced in 1999 by Zauner \cite{zauner,zaunertrans} and have applications to quantum information processing (e.g., \cite{power,detection}). Their existence has implications for quantum foundations due to their presence in the theory of quantum Bayesianism (or QBism) \cite{qbism}.

SIC-POVMs are conjectured to exist in all dimensions. They have been proven to exist (by explicit construction) in dimensions 1--21, 24, 28, 30, 31, 35, 37, 39, 43, 48, 124, and 323 \cite{zauner,G04,appleby4,G05,G08,scott1,appleby5,appleby6,applebyconstructing,scott3}, and approximate numerical solutions have been found in every dimension up to 151 and several other dimensions up to 844 \cite{renes,scott1,scott2,play,scott3}. (Some further unpublished solutions have been found by Grassl and Scott: An exact solution in dimension 53 and numerical solutions in all dimensions up to 165, dimension 1155, and dimension 2208 \cite{grasslcorr}.)
All but one (the Hoggar lines \cite{hoggar2}) of the known SIC-POVMs are (up to unitary symmetry) so-called Heisenberg SIC-POVMs---the orbit of a single vector under the action of a discrete Heisenberg group.

Appleby, Flammia, McConnell, and Yard \cite{appleby1,appleby2} recently discovered an empirical connection between SIC-POVMs and Hilbert's $12$th problem for real quadratic fields. Hilbert's $12$th problem is a longstanding open problem in number theory. For a base field $K$, it asks for an explicit construction of the abelian extensions of $K$, analogous to the Kronecker-Weber theorem in the case when $K=\Q$ and the theory of complex multiplication in the case when $K$ is imaginary quadratic. 

Appleby et. al. \cite{appleby1,appleby2} observe that, for $d > 3$, the known examples of Heisenberg SIC-POVMs are always defined over abelian extensions of the real quadratic field $\Q(\sqrt{(d+1)(d-3)})$. They predict the existence of a certain Galois orbit of SIC-POVMs called a \textit{minimal multiplet}, and they conjecturally identify the specific ray class field over which the SIC-POVMs in the minimal multiplet should be defined.

This paper formulates a conjectural construction of a Heisenberg SIC-POVM in infinitely many dimensions $d=5, 11, 17, 23, 29, 41, \ldots$, which are those $d$ that are odd prime numbers congruent to $2$ modulo $3$. 
We split our construction into two pieces. The first is \Cref{conj:family}, which predicts the existence of a Galois orbit of real algebraic units in a certain class field over $\Q(\sqrt{(d+1)(d-3)})$ satisfying several strong conditions. We prove in \Cref{thm:main} that such a set of units (if it exists) may be used to construct a SIC-POVM in dimension $d$. The second piece of the construction is \Cref{conj:unitsL}, which gives an analytic formula for a set of real numbers that we expect to be algebraic units satisfying the conditions of \Cref{conj:family}.

Of crucial importance to our construction in \Cref{conj:unitsL} are the conjectures of Stark on the leading terms of the Taylor expansions of Hecke and Artin $L$-functions at $s=1$ and $s=0$ \cite{stark1,stark2,stark3,starkproc,stark4}. Stark made his conjectures in the 1970s, and they remain open over every base field except for $\Q$ (known to Dirichlet) and imaginary quadratic fields (proof due to Stark \cite{stark1}). Much work has been done toward attacking or refining the Stark conjecture; one vital reference is the proceeding of the 2002 ``International Conference on Stark's Conjectures and Related Topics'' at Johns Hopkins University \cite{burns2004stark}.

Specifically, we use the rank 1 abelian Stark conjecture in the real quadratic case \cite{stark3,starkproc}. A key observation made in this paper is that the overlap phases of certain SIC-POVMs are Galois conjugate to square roots of Stark units---units in ray class fields predicted by Stark to coincide with $\exp(Z_A'(0))$ for a certain differenced ray class zeta function $Z_A(s)$. A congruence condition modulo a prime lying over $(d)$ determines the signs of the square roots; see \cref{eq:sqroot}.

Our construction has been verified numerically in dimensions $d=5, 11, 17,$ and $23$. Moreover, our methods give \textit{exact} solutions in these dimensions.
Namely, we can conjecturally determine Stark units as algebraic numbers using sufficiently high-precision approximate
values of $\exp(Z_A'(0))$, and then
verify directly that they produce a SIC-POVM according to the recipe
provided by our conjectures. 
Thus, we provide the first 
exact expression for a SIC-POVM in dimension $23$.

The paper is organized as follows. In \cref{sec:backsic} and \cref{sec:backnum}, we provide mathematical background and an introduction to SIC-POVMs and the Stark conjectures, and we define the notation to be used in the rest of the paper. In \cref{sec:conjs}, we present our main results and conjectures. 
Finally, in \cref{sec:egz}, we present data verifying our main conjecture numerically in dimensions $d=5, 11, 17,$ and $23$.

This paper includes and extends work from Chapter 5 of the author's PhD thesis \cite{thesis}.

\section{Acknowledgments}

Thank you to Jeff Lagarias for many inspiring conversations about the SIC-POVM problem, Hilbert's 12th problem, and the Stark conjectures, and for helpful comments on drafts of this paper, especially regarding the formulation of the conjectures.

I thank Marcus Appleby, Neil Gillespie, Markus Grassl, Gary McConnell, and Harold Stark for fruitful discussions.

\section{Complex equiangular lines}\label{sec:backsic}

In this section, we introduce the definitions from quantum information theory and design theory that we need to state our main conjectures. Specifically, we define SIC-POVMs and Heisenberg SIC-POVMs. We discuss extended unitary equivalence and Galois equivalence of SIC-POVMs and define the notion of a multiplet of Heisenberg SIC-POVMs.

\subsection{Definition of SIC-POVMs}

The study of SIC-POVMs began with Zauner's 1999 PhD thesis \cite{zauner} (see English translation \cite{zaunertrans}). The term ``SIC-POVM'' was attached to the concept in 2004 by Renes, Blume-Kohout, Scott, and Caves \cite{renes}.

The maximal number of equiangular complex lines possible in $\C^d$ is $d^2$; this was originally proved in 1975 by Delsarte, Goethals, and Seidel using orthogonal polynomials \cite{delsarte}.
\begin{prop}[Delsarte, and Goethals, and Seidel \cite{delsarte}]\label{prop:delsarte1}
Let $\alpha > 0$.  Consider a set $V$ of unit vectors in $\C^d$ spanning equiangular lines; that is, $\abs{\langle v, w \rangle}^2 = \alpha$ whenever $v, w \in V$ and $v \neq w$.  Then, $\abs{V} \leq d^2$.
\end{prop}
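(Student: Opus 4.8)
The plan is to exploit the standard encoding of lines as rank-one projections and to show that equiangularity forces these projections to be linearly independent inside a real vector space of dimension $d^2$. First I would associate to each unit vector $v \in V$ the orthogonal projection $P_v = v v^*$ onto the line $\C v$. Each $P_v$ is self-adjoint, so all of them lie in the real vector space $\hH_d$ of Hermitian $d \times d$ matrices, which has $\dim_\R \hH_d = d^2$. It then suffices to prove that the family $\{P_v : v \in V\}$ is linearly independent in $\hH_d$, for this immediately gives $\abs{V} \le \dim_\R \hH_d = d^2$.

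To establish independence I would equip $\hH_d$ with the Hilbert--Schmidt inner product $\langle A, B\rangle = \Tr(AB)$ and compute the Gram matrix $G$ of the projections. A direct calculation gives $\Tr(P_v P_w) = \Tr(v v^* w w^*) = \abs{\langle v, w\rangle}^2$, so that $G_{vw} = 1$ when $v = w$ (each $v$ being a unit vector) and $G_{vw} = \alpha$ when $v \ne w$ (by the equiangularity hypothesis). Writing $n = \abs{V}$, this means $G = (1-\alpha) I_n + \alpha J_n$, where $I_n$ is the identity and $J_n$ the all-ones matrix; its eigenvalues are therefore $1 + (n-1)\alpha$ with multiplicity one and $1 - \alpha$ with multiplicity $n-1$.

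Positive definiteness of $G$, which is equivalent to linear independence of the $P_v$, then follows once I check that both eigenvalues are strictly positive. The eigenvalue $1 + (n-1)\alpha$ is positive because $\alpha > 0$. For the eigenvalue $1 - \alpha$, I would invoke the Cauchy--Schwarz inequality: since distinct members of $V$ span distinct lines, no two are scalar multiples of one another, so $\abs{\langle v, w\rangle} < \|v\|\,\|w\| = 1$ strictly for $v \ne w$, giving $\alpha < 1$. (The case $n = 1$ is trivial.) Hence $G$ is positive definite, the projections are independent, and $\abs{V} \le d^2$.

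The argument is short, and the only delicate point, which I would flag as the crux, is the strict inequality $\alpha < 1$. It is exactly here that one uses that the vectors span genuinely distinct lines rather than merely being pairwise unequal: without strictness the Gram matrix could be only positive semidefinite, the $P_v$ could fail to be independent, and the counting bound would break down.
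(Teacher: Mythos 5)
Your proof is correct. Note that the paper does not actually prove \Cref{prop:delsarte1} at all: it cites Delsarte, Goethals, and Seidel, remarking only that the original 1975 argument used orthogonal polynomials. So your route differs from the cited source rather than from anything in the paper. Your argument is the now-standard one: pass to the rank-one projections $P_v = vv^\ct$, compute the Gram matrix $G = (1-\alpha)I_n + \alpha J_n$ under the trace form, and deduce linear independence in the $d^2$-dimensional real space of Hermitian matrices. The eigenvalue computation is right, and your flagging of the strict inequality $\alpha < 1$ is exactly the delicate point: as literally stated, with $V$ merely a set of unit vectors, one could take many unimodular multiples of a single vector, giving $\alpha = 1$ and violating the bound, so the hypothesis that the vectors span genuinely distinct \emph{lines} (forcing strict Cauchy--Schwarz) is indispensable. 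Comparing the two approaches: the Delsarte--Goethals--Seidel polynomial method is heavier machinery but generalizes, yielding bounds for sets with $s$ distinct inner-product moduli (e.g.\ $\binom{d+s-1}{s}\binom{d}{s}$-type bounds) and connecting to the theory of designs; your linear-algebraic argument is shorter, entirely self-contained, and delivers a stronger structural byproduct that is directly relevant to this paper's subject: when $\abs{V} = d^2$, the projectors $P_v$ form a \emph{basis} of the Hermitian matrices, which is precisely the ``informationally complete'' property of a SIC-POVM.
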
 
The same authors also show that, for a set of $d^2$ equiangular complex lines, the size of the angle is determined.
\begin{prop}[Delsarte, and Goethals, and Seidel \cite{delsarte}]\label{prop:delsarte2}
For any set $V$ of unit vectors in $\C^d$ spanning $d^2$ equiangular lines with $\abs{\langle v, w \rangle}^2 = \alpha$ whenever $v, w \in V$ and $v \neq w$,
\begin{equation}
\alpha = \frac{1}{d+1},
\end{equation}
and thus the common angle is $\arccos\left(\frac{1}{\sqrt{d+1}}\right)$.
\end{prop}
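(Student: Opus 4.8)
The plan is to pass from lines to operators and let a dimension count force the value of $\alpha$. To each $v \in V$ I would associate the rank-one orthogonal projection $\Pi_v$ onto the line $\C v$; these all lie in the real vector space $\hH_d$ of $d\times d$ Hermitian matrices, which I equip with the trace inner product $(A,B)\mapsto \Tr(AB)$. The hypotheses translate into three trace identities: $\Tr(\Pi_v)=1$, $\Tr(\Pi_v^2)=\Tr(\Pi_v)=1$ (since $\Pi_v$ is a projection), and $\Tr(\Pi_v\Pi_w)=\abs{\langle v,w\rangle}^2=\alpha$ for $v\neq w$. The crucial coincidence is that $\dim_{\R}\hH_d=d^2=\abs{V}$, so if the $\Pi_v$ are linearly independent they form a basis of $\hH_d$, and $\alpha$ will be pinned down by expressing the identity operator in this basis.

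First I would establish linear independence. The Gram matrix $G=\big(\Tr(\Pi_v\Pi_w)\big)_{v,w\in V}$ equals $(1-\alpha)I_{d^2}+\alpha J$, with $J$ the all-ones matrix, so its eigenvalues are $1+(d^2-1)\alpha$ (once) and $1-\alpha$ (with multiplicity $d^2-1$). Being a genuine Gram matrix, $G$ is positive semidefinite, which forces $\alpha\le 1$; and $\alpha=1$ is impossible because the equality case of Cauchy--Schwarz would make all $d^2$ lines coincide (for $d\ge 2$; the case $d=1$ is trivial). Hence $\alpha<1$, both eigenvalues are strictly positive, $G$ is invertible, and the $\Pi_v$ form a basis of $\hH_d$. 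I expect this linear-independence step---really the statement that a maximal equiangular set is informationally complete---to be the main obstacle, since it is what rules out degenerate configurations and makes the dimension count bite; the rest is bookkeeping.

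With a basis in hand, I would determine $\alpha$ by expanding the identity as $I=\sum_{w\in V}c_w\,\Pi_w$. The symmetry of the system (the matrix $G$ is permutation-invariant and has the all-ones vector as an eigenvector) forces all $c_w$ equal to a common value $c$. Taking the trace and using $\Tr(I)=d$ and $\Tr(\Pi_w)=1$ gives $d=d^2c$, so $c=1/d$; that is, the tight-frame identity $\sum_{w\in V}\Pi_w=d\,I$ holds. Pairing this identity with a single $\Pi_v$ and reading off the trace values gives $d=\Tr\!\big(\Pi_v\sum_{w}\Pi_w\big)=1+(d^2-1)\alpha$, whence $\alpha=\frac{d-1}{d^2-1}=\frac{1}{d+1}$, and the common angle equals $\arccos\!\big(1/\sqrt{d+1}\big)$, as claimed.
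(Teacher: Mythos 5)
Your proof is correct, and every step checks out: the Gram matrix $G=(1-\alpha)I_{d^2}+\alpha J$ has eigenvalues $1+(d^2-1)\alpha$ and $1-\alpha$; ruling out $\alpha=1$ via the Cauchy--Schwarz equality case makes $G$ positive definite, hence the $\Pi_v$ are linearly independent and, by the dimension coincidence $\dim_\R \hH_d = d^2 = \abs{V}$, a basis; and the system $Gc = \mathbf{1}$ obtained by pairing $I = \sum_w c_w \Pi_w$ against each $\Pi_v$ has a unique solution, which is constant since the all-ones vector is an eigenvector of $G$, yielding the tight-frame identity $\sum_w \Pi_w = dI$ and then $d = 1 + (d^2-1)\alpha$. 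The comparison with the paper is somewhat degenerate: the paper states this proposition with a citation to Delsarte--Goethals--Seidel and gives no proof at all, remarking only that the companion bound $\abs{V}\le d^2$ was originally proved using orthogonal polynomials. Your argument is the standard self-contained frame-theoretic proof and is arguably more transparent than the original route; it also isolates exactly where maximality enters (the dimension count turning linear independence into a basis---linear independence alone holds for any equiangular set with $\alpha<1$). Two minor simplifications: the positive-semidefiniteness appeal is redundant, since $\alpha = \abs{\langle v,w\rangle}^2 \in [0,1)$ follows directly from Cauchy--Schwarz for distinct lines; and one should note that $\Tr(AB)$ is indeed a real inner product on Hermitian matrices because $\overline{\Tr(AB)} = \Tr(B^\ct A^\ct) = \Tr(AB)$, which your argument implicitly uses when working over $\R$.
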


A SIC-POVM is defined as a set of measurements (i.e., projectors---idempotent Hermitian operators) on $d$-dimensional Hilbert space satisfying certain properties, and it is equivalent to a set of equiangular complex lines achieving the upper bound from \Cref{prop:delsarte1}.
\begin{defn}[SIC-POVM]
A \textbf{symmetric informationally complete positive operator-valued measure (SIC-POVM)} is a set $\{\Pi_1, \Pi_2, \ldots, \Pi_{d^2}\}$ of 
cardinality $d^2$ consisting of  rank 1 Hermitian $d \times d$ matrices $\Pi_i$ such that each $\Pi_i^2 = \Pi_i$, and
\begin{equation}
\Tr\left(\Pi_i\Pi_j\right) = \left\{\begin{array}{ll}
1 & \mbox{ if } i=j, \\
\frac{1}{d+1} & \mbox{ if } i\neq j.
\end{array}\right.
\end{equation}
Write $\Pi_i = v_i v_i^\ct$, where $v_i$ is a column vector, and $v_i^\ct$ is its conjugate-transpose, a row vector. Then, the $v_i$ define a set of $d^2$ equiangular complex lines in $\C^d$, and conversely any set of $d^2$ equiangular complex lines in $\C^d$ define a SIC-POVM.
We will use the term ``SIC-POVM'' interchangeably with ``set of $d^2$ equiangular complex lines in $\C^d$.''
\end{defn}

There are two types of operators on $\C^d$ preserving the SIC-POVM property.
A SIC-POVM $\{\C v_1,\ldots,\C v_{d^2}\}$ may be ``rotated'' by any unitary matrix $U \in \U(d) = \{U \in \GL(\C^d) : UU^\ct = 1\}$ to obtain another SIC-POVM $\{\C Uv_1,\ldots,\C Uv_{d^2}\}$.  Moreover, if $C_d$ is the complex conjugation operator on $\C^d$, so that $C_d v := \ol{v}$, then $C_d$ also preserves the SIC-POVM property (and the same holds for any ``antiunitary'' operator of the form $C_d U$).  These operators may be collected together to form the extended unitary group.
\begin{defn}
Define the \textbf{extended unitary group} $\EU(d) := \U(d) \sqcup C_d \U(d)$.
\end{defn}
\begin{lem}
The action of $\EU(d)$ takes SIC-POVMs to SIC-POVMs.
\end{lem}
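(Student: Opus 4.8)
The plan is to show that each element of $\EU(d)$ preserves the two features that characterize a SIC-POVM in its equiangular-lines formulation: it sends unit vectors to unit vectors, and it preserves the magnitude $\abs{\langle v_i,v_j\rangle}$ of every pairwise inner product. By \Cref{prop:delsarte2} and the definition of a SIC-POVM, a set of $d^2$ unit vectors spans a SIC-POVM precisely when $\abs{\langle v_i,v_j\rangle}^2 = \frac{1}{d+1}$ for all $i\neq j$, so these two invariances suffice. Since $\EU(d) = \U(d)\sqcup C_d\U(d)$, every element is either a unitary $U$ or an antiunitary operator $C_dU$, and I would handle the two cases separately.

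For $U\in\U(d)$ the verification is immediate: the relation $UU^\ct = 1$ gives $\langle Uv, Uw\rangle = (Uv)^\ct(Uw) = v^\ct U^\ct U w = v^\ct w = \langle v,w\rangle$, so $U$ preserves inner products exactly (in particular norms), and being $\C$-linear it carries each line $\C v_i$ to the line $\C Uv_i$. Thus $\{\C Uv_1,\ldots,\C Uv_{d^2}\}$ is again a SIC-POVM.

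For the antiunitary case I would first record the effect of $C_d$ on inner products. Writing $\langle v,w\rangle = v^\ct w$, one computes $\langle \ol v, \ol w\rangle = \ol v^{\,\ct}\ol w = v^T\ol w = \overline{v^\ct w} = \overline{\langle v,w\rangle}$, so $C_d$ conjugates inner products and hence preserves both their absolute values and the norm of each vector. Composing with a unitary gives $\abs{\langle C_dUv, C_dUw\rangle} = \abs{\langle Uv, Uw\rangle} = \abs{\langle v,w\rangle}$, as required. The one point demanding attention is that $C_dU$ is antilinear rather than linear, so that $(C_dU)(\lambda v) = \ol\lambda\,(C_dU)v$; nevertheless this still sends the complex line $\C v_i$ to the complex line $\C (C_dUv_i)$, because $\ol\lambda$ ranges over all of $\C$ as $\lambda$ does, so lines go to lines and the equiangular structure is preserved.

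No step here presents a genuine obstacle; the only subtlety worth flagging is the antilinearity of the operators in $C_d\U(d)$, which must be used correctly both in checking that inner-product magnitudes are preserved (conjugation leaves $\abs{\cdot}$ unchanged) and in checking that complex lines, not merely vectors, are carried to complex lines. Equivalently, one could run the whole argument at the level of projectors $\Pi_i = v_iv_i^\ct$: a unitary acts by $\Pi\mapsto U\Pi U^\ct$ and $C_d$ by $\Pi \mapsto \ol\Pi$, and since each $\Tr(\Pi_i\Pi_j)$ is real, cyclicity of the trace together with $\Tr(\ol{\Pi_i}\,\ol{\Pi_j}) = \overline{\Tr(\Pi_i\Pi_j)}$ shows the defining trace relations are preserved in both cases.
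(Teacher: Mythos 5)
Your proof is correct and follows essentially the same route as the paper's: the paper's two-sentence argument likewise observes that $\U(d)$ preserves the Hermitian inner product while $C_d\U(d)$ conjugates it, so both preserve its absolute value and hence the SIC-POVM property. Your extra care about antilinearity (lines mapping to lines) and the alternative projector/trace formulation are just fuller elaborations of the same idea.
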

\begin{proof}
The action of $\U(d)$ preserves the Hermitian inner product; $C_d \U(d)$ conjugates the Hermitian inner product.  Thus, both preserve its absolute value and thus the SIC-POVM property.
\end{proof}

\begin{defn}[Overlaps]
If $v_1, v_2, \ldots, v_{d^2}$ are unit vectors in $\C^d$ defining a SIC-POVM, the \textbf{overlaps} are the $d^4$ complex numbers $\langle v_i, v_j \rangle$. The \textbf{overlap phases}, for $i \neq j$, are the complex numbers $\sqrt{d+1}\langle v_i, v_j \rangle$, which lie on the unit circle.
\end{defn}

\subsection{Definition of Heisenberg SIC-POVMs}
Heisenberg SIC-POVMs are a special class of SIC-POVMs.
Let $\zeta_d = e\left(\frac{1}{d}\right) = \exp\left(\frac{2\pi i}{d}\right)$ be a $d$th root of unity.
\begin{defn}[Heisenberg group]
Let $d' = d$ if $d$ is odd, $d' = 2d$ if $d$ is even.  Let $I$ be the $d \times d$ identity matrix.
The Heisenberg group $\H(d)$ is the finite group of order $d'd^2$ generated by the $d \times d$ scalar matrix $\zeta_{d'} I$ and the $d \times d$ matrices
\begin{equation}
X = \left(\begin{array}{ccccc}
0 & 0 & \cdots & 0 & 1 \\
1 & 0 & \cdots & 0 & 0 \\
0 & 1 & \cdots & 0 & 0 \\
\vdots & \vdots & \ddots & \vdots & \vdots \\
0 & 0 & \cdots & 1 & 0
\end{array}\right), \ \ 
Z = \left(\begin{array}{ccccc}
1 & 0 & 0 & \cdots & 0 \\
0 & \zeta_d & 0 & \cdots & 0 \\
0 & 0 & \zeta_d^2 & \cdots & 0 \\
\vdots & \vdots & \vdots & \ddots & \vdots \\
0 & 0 & 0 & \cdots & \zeta_d^{d-1}
\end{array}\right).
\end{equation}
\end{defn}
The Heisenberg group $\H(d)$ spans the vector space $\M_d(\C)$ of $d \times d$ complex matrices, and a canonical basis is given as follows.
\begin{defn}[Heisenberg basis]
Let $\tau_d = \exp\left(\frac{(d+1)\pi i}{d}\right) = -\exp\left(\frac{\pi i}{d}\right)$.
The set of $d^2$ matrices 
\begin{equation}\label{eq:Dmn}
D_{m,n} = \tau_{d}^{mn}X^m Z^n \mbox{ for } 0 \leq m,n \leq d-1
\end{equation}
forms a basis of $\M_d(\C)$ over $\C$, and this basis is called the \textbf{Heisenberg basis}. (The $D_{m,n}$ are known as \textbf{displacement operators}.)
\end{defn}
Empirically, all but one of the known SIC-POVMs are equivalent to orbits of the Heisenberg group action, and this observation motivates the following definition.
\begin{defn}[Heisenberg SIC-POVM]
A \textbf{Heisenberg SIC-POVM} is a SIC-POVM of the form $\{\C D_{m,n}v : 0 \leq m,n \leq d-1\}$ for some vector $v \in \C^d$.  This $v$ is called a \textbf{fiducial vector}.
\end{defn}

The elements of $\EU(d)$ that preserve the property of being a \textit{Heisenberg} SIC-POVM are restricted to a finite group, the \textbf{extended Clifford group} $\EC(d)$, defined to be the normalizer of $\H(d)$ inside $\EU(d)$.
\begin{lem}
If $v$ is a fiducial vector for a Heisenberg SIC-POVM, and $\g \in \EC(d)$, then $\g v$ is also a fiducial vector for a Heisenberg SIC-POVM.  
Conversely, if $v$ and $w$ are $\EU(d)$-equivalent fiducial vectors, they are in fact $\EC(d)$-equivalent.
\end{lem}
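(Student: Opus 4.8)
The plan is to treat the two directions separately, the forward one being a direct orbit computation and the converse requiring a rigidity statement about Heisenberg SICs. For the forward direction, recall that $\EC(d)$ is the normalizer of $\H(d)$ in $\EU(d)$, so for $\g\in\EC(d)$ conjugation $h\mapsto \g h\g^{-1}$ is an automorphism of $\H(d)$ fixing the scalar center. In particular it permutes the displacement operators up to phase: $\g^{-1}D_{m,n}\g = \omega_{m,n}\,D_{\s(m,n)}$ for a unit scalar $\omega_{m,n}$ and a bijection $\s$ of $(\Z/d\Z)^2$. Hence $D_{m,n}\g v = \g\,(\g^{-1}D_{m,n}\g)\,v = \omega_{m,n}\,\g D_{\s(m,n)}v$ (with $\omega_{m,n}$ replaced by its conjugate if $\g$ is antiunitary, but still of unit modulus), so the set of lines $\{\C D_{m,n}\g v\}$ equals $\g\{\C D_{m,n}v\}$. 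The latter is the image under $\g\in\EU(d)$ of the SIC-POVM generated by $v$, hence itself a SIC-POVM by the previous lemma; thus $\g v$ is a fiducial vector.

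For the converse, I read ``$v$ and $w$ are $\EU(d)$-equivalent'' as the statement that the Heisenberg SIC-POVMs $S_v=\{\C D_{m,n}v\}$ and $S_w=\{\C D_{m,n}w\}$ satisfy $US_v=S_w$ for some $U\in\EU(d)$, the goal being to produce such a $U$ inside $\EC(d)$. Both $S_v$ and $S_w$ are orbits of the one fixed group $\H(d)$. First I would compose $U$ on the left with a suitable $D_{a,b}\in\H(d)\subseteq\EC(d)$ so that $U$ carries the fiducial projector $vv^\ct$ to $ww^\ct$; this is possible because $U\,vv^\ct\,U^{-1}$ is one of the projectors of $S_w$. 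After this normalization $U$ induces a bijection $\phi$ of the index set $(\Z/d\Z)^2$ via $U\,(D_{m,n}vv^\ct D_{m,n}^\ct)\,U^{-1} = D_{\phi(m,n)}ww^\ct D_{\phi(m,n)}^\ct$, with $\phi(0,0)=(0,0)$. Because the $d^2$ SIC projectors form a basis of $\M_d(\C)$, any unitary commuting with all of them is scalar, so a unitary is determined up to phase by the permutation it induces on the SIC; consequently, to show $U\in\EC(d)=N_{\EU(d)}(\H(d))$ it suffices to show that $\phi$ is an \emph{affine} map of $(\Z/d\Z)^2$, for then each $UD_{m,n}U^{-1}$ induces the same permutation of $S_w$ as some displacement operator $D_{m',n'}$ and is therefore a scalar multiple of it.

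The crux is thus proving that $\phi$ is affine, i.e. that a Heisenberg SIC-POVM determines its Heisenberg group up to the action of $\EC(d)$. I would extract this from the finer invariants that the bare angle condition hides, namely the triple overlaps $\Tr(\Pi_a\Pi_b\Pi_c)$. For a Weyl--Heisenberg SIC these are invariant under simultaneous translation of the indices, and their phases encode the $\Z/d\Z$-valued symplectic form on $(\Z/d\Z)^2$. Since a unitary $U$ preserves every triple product while relabeling indices by $\phi$, the map $\phi$ must preserve this symplectic (hence affine) structure; an antiunitary $U$ conjugates the triple products and so reverses the symplectic form, which is exactly what the factor $C_d$ supplies, still landing $U$ in $\EC(d)$. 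This rigidity --- essentially the uniqueness, up to the Clifford normalizer, of the Weyl--Heisenberg group admitting a given SIC as an orbit --- is the main obstacle and the only genuinely representation-theoretic input; for the detailed verification I would appeal to the structural analysis of the extended Clifford group due to Appleby.
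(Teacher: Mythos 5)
Your forward direction is correct and is the standard argument: since $\g\in\EC(d)$ normalizes $\H(d)$, conjugation permutes the displacement operators up to phase, so $\{\C D_{m,n}\g v\}=\g\{\C D_{m,n}v\}$, which is a SIC-POVM because $\g\in\EU(d)$. Note for calibration that the paper does not prove this lemma at all---its ``proof'' is the citation to Scott and Grassl, Section 3---so the real question is whether your converse argument is sound as a freestanding proof. It has two genuine gaps. First, a hypothesis mismatch: the lemma assumes only that the \emph{vectors} are $\EU(d)$-equivalent, i.e.\ $\C w=\C Uv$ for some $U\in\EU(d)$, whereas you assume from the outset that $U$ carries the whole line-set $S_v=\{\C D_{m,n}v\}$ onto $S_w=\{\C D_{m,n}w\}$. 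These are not the same: from $\C w=\C Uv$ one only gets that $US_v$ is a SIC through $\C w$ that is covariant under the conjugate group $U\H(d)U^{-1}$, which a priori is a different SIC from $S_w$ and a different subgroup from $\H(d)$. Showing $US_v=S_w$ (equivalently, that the WH structure on the orbit is rigid) is a substantive part of the claim. In the prime-dimensional setting of this paper one can bridge it with Zhu's theorem (cited in Section 3 of the paper): $US_v$ is group covariant, hence WH covariant, hence equals $S_u$ for some fiducial $u$ with $\C w\in S_u$, forcing $US_v=S_w$; but your proof as written silently proves a different statement, and for composite $d$ the bridge is exactly the hard point.

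Second, the crux of your own argument---that the permutation $\phi$ induced on $(\Z/d\Z)^2$ is affine---is asserted, not proved. Your mechanism (``the phases of the triple products $\Tr(\Pi_a\Pi_b\Pi_c)$ encode the symplectic form'') is too quick: writing $\Pi_p=D_p\Pi D_p^\ct$ and commuting displacements, a triple product factors as a symplectic phase \emph{times} fiducial-dependent quantities of the form $\Tr(\Pi D_p\Pi D_q\Pi D_r)$, so invariance of the triple products under $U$ does not by itself separate the symplectic part from the fiducial part; extracting affinity requires a genuine argument (as in Zhu's prime-dimension proof or Appleby's structural analysis), and you explicitly defer to Appleby for it. The surrounding scaffolding is fine---the normalization by a displacement so that $Uvv^\ct U^{-1}=ww^\ct$, the observation that the $d^2$ SIC projectors span $\M_d(\C)$ so a unitary is determined up to phase by its conjugation action on them, and the deduction that affine $\phi$ makes each $UD_{m,n}U^{-1}$ a scalar (finite-order, hence root-of-unity) multiple of a displacement. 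But with the hypothesis mismatch unrepaired and the affinity step outsourced, what you have is a correct skeleton with the two load-bearing steps missing; ironically, since the paper itself only cites Scott and Grassl, your deferral to Appleby leaves you at roughly the same level of rigor as the paper, but not with a proof.
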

\begin{proof}
See Scott and Grassl \cite{scott1}, section 3.
\end{proof}

Some authors consider the larger class of \textbf{group covariant SIC-POVMs}, those which are orbits of some subgroup of $\U(d)$. 
The Hoggar lines are group convariant for $\H(2) \tensor \H(2) \tensor \H(2)$, so all known SIC-POVMs are group convariant. In the case of prime dimension, it was shown by Zhu that all group covariant SIC-POVMs are equivalent to Heisenberg SIC-POVMs \cite{primezhu}.

\subsection{The Galois action of Heisenberg SICs}
In 2016, Appleby, Flammia, McConnell, and Yard \cite{appleby1,appleby2} numerically discovered a surprising connection between SIC-POVMs and ray class fields of real quadratic fields.  For all Heisenberg SIC-POVMs they checked, they found that the ratios of the entries of the fiducial vector lie in an abelian extension of the real quadratic field $\Q(\sqrt{\Delta})$, where $\Delta = \Delta_d = (d+1)(d-3)$.  The field $\Q(\sqrt{\Delta})$ is special because it contains a small unit 
\begin{equation}
\e = \e_d = \frac{(d-1)+\sqrt{\Delta}}{2}.
\end{equation}
The unit $\e$ has the property that $\e^3 \con 1 \Mod{d}$, and its presence is related to the order 3 ``Zauner symmetry'' enjoyed by many known SIC-POVMs. We call $\e$ the \textbf{Zauner unit}. Note that the Zauner unit is not always equal to the fundamental unit, but is sometimes a higher power of it; for example, $\e_{15} = (2+\sqrt{3})^2$. 

Let $K = \Q(\sqrt{\Delta})$, and let $E$ be the field generated by the ratios of the entries of the fiducial vector along with the $d'$th roots of unity, where $d' = d$ if $d$ is odd, and $d' = 2d$ if $d$ is even.  
If $v$ is a Heienberg fiducial vector and $\s \in \Gal(E/K)$, then $v^\s$ is also a Heisenberg fiducial vector; $v^\s$ may or may not lie in the same $\EC(d)$ orbit as $v$.  This Galois action respects orbits because $(\g v)^\s = \g^\s v^\s$ and $\EC(d)$ is Galois-closed.  
\begin{defn}[Multiplet]
The set of all those $\EC(d)$-orbits of fiducial vectors, which are Galois equivalent to a given $\EC(d)$-orbit, is called a \textbf{multiplet}.
\end{defn}

\section{Explicit class field theory and zeta functions}\label{sec:backnum}

In this section, we give the number-theoretic background we need to state our conjectures.
A much more complete exposition of class field theory may be found in Neukirch's book \cite{neukirch}.
The real quadratic Stark conjectures may be found in \cite{stark3}.

\subsection{Global class field theory}

Let $K$ be a number field, and let $\OO_K$ be its ring of algebraic integers, the maximal order of $K$.

A modulus $\mm$ is a pair $\mm = (\cc, S)$, where $\cc$ is an ideal of $\OO_K$, and $S$ is a subset of the (possibly empty) set of real embeddings $K \to \R$. Associate to the real embeddings $\rho_1, \ldots, \rho_r$ the ``infinite primes'' $\infty_1, \ldots, \infty_r$, and write $\mm$ using the notation $\mm = \cc \prod_{\rho_j \in S} \infty_j$. (An example of this notation is $\mm = (7)\infty_1\infty_3$).

\begin{defn}\label{defn:ray}
If $\mm = (\cc,S)$, define the \textbf{ray class group modulo $\mm$} to be
\begin{equation}
\Cl_{\mm} = \frac{\{\mbox{fractional ideals coprime to } \cc\}}{\{\mbox{principal frac. ideals } (\alpha) \mbox{ coprime to } \cc \mbox{ with } \alpha \con 1 \Mod{\cc} \mbox{ and } \rho(a) > 0 \mbox{ for all } \rho \in S\}}.
\end{equation}
\end{defn}

If $\cc'$ is any nonzero subideal of $\cc$, then the ray class group is unaffected by imposing the stronger condition of coprimility to $\cc'$. 
\begin{equation}
\Cl_{\mm} \isom \frac{\{\mbox{fractional ideals coprime to } \cc'\}}{\{\mbox{principal frac. ideals } (\alpha) \mbox{ coprime to } \cc' \mbox{ with } \alpha \con 1 \Mod{\cc} \mbox{ and } \rho(a) > 0 \mbox{ for all } \rho \in S\}}.
\end{equation}

Consider two moduli $\mm = (\cc,S)$ and $\mm' = (\cc',S')$. We say that $\mm|\mm'$ if $\cc \supset \cc'$ and $S \subset S'$. The quotient maps $\pi_{\mm',\mm} : \Cl_{\mm'} \surj \Cl_{\mm}$ are defined by first imposing comprimality to $\cc'$ in $\Cl_{\mm}$, then modding out by the stronger congruence and positivity conditions modulo $\mm'$.

The abelian Galois extensions of $K$ are associated to quotients of ray class groups, by the following existence theorem of Takagi.
\begin{thm}[Existence theorem]\label{thm:existence}
Let $K$ be a number field, and let $K^{\rm ab}$ be the maximal abelian extension of $K$ (an infinite-degree extension). Then, there is a natural isomorphism of the Galois group $\Gal(K^{\rm ab}/K)$ with the inverse limit $\ds\lim_{\leftarrow} \Cl_{\mm}$ with respect to the quotient maps $\pi_{\mm',\mm}$. This isomorphism is called the \textbf{Artin map}:
\begin{equation}
\Art :  \ds\lim_{\leftarrow} \Cl_{\mm} \to \Gal(K^{\rm ab}/K).
\end{equation}
The field $L_\mm/K$ corresponding to $\Cl_{\mm}$ by Galois theory---so that $\Gal(L_\mm/K) \isom \Cl_{\mm}$ under the Artin map---is called the \textbf{ray class field} of $K$ modulo $\mm$. If $\mm = (\cc,S)$, then the extension $L_\mm$ is ramified only at the primes dividing $\cc$ and the real places in $S$.
\end{thm}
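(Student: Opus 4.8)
The statement is the Takagi existence theorem packaged together with Artin reciprocity, and a complete proof is the content of an entire course in class field theory; I sketch the architecture I would follow, working adelically. Let $\A_K^\times$ denote the idele group and $C_K = \A_K^\times/K^\times$ the idele class group, given its natural topology. The first move is purely formal: each modulus $\mm = (\cc, S)$ cuts out an open subgroup $U_\mm \subset C_K$ of finite index (the ideles $\con 1 \Mod{\cc}$ at the finite places dividing $\cc$, positive at the real places of $S$, and units at the remaining places), and one checks directly that $C_K/U_\mm \isom \Cl_\mm$, with the quotient maps $\pi_{\mm',\mm}$ corresponding to the inclusions $U_{\mm'} \subset U_\mm$. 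Thus it suffices to produce a topological isomorphism $C_K/D_K \isom \Gal(\ab{K}/K)$, where $D_K$ is the closure of the connected component of the identity, compatible with these quotients; the theorem then follows by passing to the inverse limit over $\mm$.

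The core input is Artin reciprocity, which I would obtain cohomologically. Granting local class field theory---for each place $v$ a local Artin map $K_v^\times \to \Gal(\ab{K_v}/K_v)$ with known kernel and image---I assemble the local maps into a global homomorphism on $\A_K^\times$ and must show it annihilates the principal ideles $K^\times$. The efficient route is to verify that the pair $(\Gal, C)$ is a \emph{class formation}: one proves the idele-class Hilbert 90, $H^1(\Gal(L/K), C_L) = 0$ for every finite Galois $L/K$, and that $H^2(\Gal(L/K), C_L)$ is cyclic of order $[L:K]$ with a canonical fundamental class. Tate--Nakayama duality then converts the fundamental class into the reciprocity isomorphism $C_K/N_{L/K}C_L \isom \ab{\Gal(L/K)}$, functorial in $L$, which for abelian $L$ gives $\Gal(L/K)$ on the nose.

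With reciprocity established, the existence theorem is the assertion that the norm groups $N_{L/K}C_L$ are precisely the open finite-index subgroups of $C_K$; this is exactly what is needed to realize each $U_\mm$ as the norm group of a field $L_\mm$ with $\Gal(L_\mm/K) \isom C_K/U_\mm \isom \Cl_\mm$. I would prove it by first adjoining a root of unity $\zeta_n$ to reduce to a base field containing enough roots of unity, where Kummer theory makes the abelian extensions explicit and shows that $(C_K)^n$ is a norm group there; a norm-index count then forces every open subgroup containing some $(C_K)^n$ to be a norm group, and a descent recovers the claim over $K$. The ramification assertion is finally read off place by place: the local component of the Artin map is unramified at every finite $v \nmid \cc$ and at every real $v$ with $\infty_v \notin S$, because the local units lie in the corresponding norm group there, so $L_\mm$ is unramified outside $\cc$ and $S$.

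The main obstacle is the order computation $(C_K : N_{L/K}C_L) = [L:K]$ for cyclic $L$, equivalently the determination of $H^2(\Gal(L/K), C_L)$. One of the two inequalities follows formally from a Herbrand-quotient computation on the cohomology of the idele and unit groups once the local invariants are in hand; the other, genuinely arithmetic, inequality is the crux, classically supplied by the nonvanishing of Hecke $L$-functions at $s = 1$---the same analytic fact underlying Dirichlet's theorem on primes in arithmetic progressions---and cohomologically by a delicate reduction to cyclic Kummer extensions of prime degree. Everything else above, including the idele-class/ray-class dictionary, the inverse limit, and the ramification bookkeeping, is formal by comparison.
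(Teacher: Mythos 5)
The paper does not prove this theorem itself; its ``proof'' is a citation to Neukirch, Chapter VI (especially Theorem 7.1), which develops exactly the idelic class field theory you sketch --- the dictionary $C_K/U_\mm \isom \Cl_\mm$, the reciprocity law via class formations, the existence theorem for open finite-index norm groups via Kummer theory and a norm-index count, and the ramification statement read off from local units lying in the norm groups. Your outline is therefore correct and follows essentially the same route as the paper's cited source, the only internal variation being that Neukirch derives the reciprocity map through his abstract class field axiom and Frobenius-lift construction, with purely algebraic proofs of both inequalities, rather than through Tate--Nakayama fundamental classes or the analytic nonvanishing of Hecke $L$-functions at $s=1$ --- alternatives you yourself flag as interchangeable at that step.
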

\begin{proof}
See \cite{neukirch}, Chapter VI, especially Theorem 7.1.
\end{proof}

\subsection{Ray class zeta functions and Hecke $L$-functions}

We now define two Dirichlet series, $\zeta_A(s)$ and $Z_A(s)$, attached to a ray ideal class $A$ of the ring of integers of a number field.
\begin{defn}[Ray class zeta function]
Let $K$ be any number field, and let $\cc$ be an ideal of the maximal order $\OO_K$.  Let $S$ be a subset of the real places of $K$ (i.e., the embeddings $K \inj \R$).  Let $A$ be a ray ideal class modulo $\mm = (\cc,S)$. 
Define the \textbf{zeta function of the ray class $A$} to be
\begin{equation}
\zeta(s,A) = \sum_{\aa \in A} N(\aa)^{-s} \mbox{ for } \Re(s) > 1.
\end{equation}
\end{defn}
This function may be meromorphically continued to the whole complex plane.
\begin{thm}
This function $\zeta(s,A)$ has a meromorphic continuation to $\C$. It has a simple pole at $s=1$ with residue independent of $A$, and no other poles.
\end{thm}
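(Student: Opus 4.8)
The plan is to reduce the sum over a single ray class to a finite linear combination of Hecke $L$-functions, whose analytic continuation is classical, and then read off the pole from the one "bad" term. Since $\Cl_{\mm}$ is a finite abelian group, harmonic analysis on it is elementary, and the orthogonality relations for its characters let me isolate $\zeta(s,A)$ inside a family of Dirichlet series with much better analytic behavior than the individual class sums.

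Concretely, for each character $\chi \colon \Cl_{\mm} \to \C^{\times}$ I would introduce the Hecke $L$-function
\[
L(s,\chi) = \sum_{\aa} \chi(\aa) N(\aa)^{-s} = \sum_{B \in \Cl_{\mm}} \chi(B)\, \zeta(s,B),
\]
where the first sum runs over integral ideals $\aa$ coprime to $\cc$, $\chi(\aa)$ denotes $\chi$ evaluated on the ray class of $\aa$, and the second equality is a regrouping of the terms by ray class, valid for $\Re(s)>1$ where everything converges absolutely. By orthogonality of the characters of $\Cl_{\mm}$,
\[
\sum_{\chi} \overline{\chi(A)}\,\chi(B) = \abs{\Cl_{\mm}} \cdot \delta_{A,B},
\]
so inverting the displayed relation yields the key identity
\[
\zeta(s,A) = \frac{1}{\abs{\Cl_{\mm}}} \sum_{\chi} \overline{\chi(A)}\, L(s,\chi), \qquad \Re(s)>1,
\]
the sum being over all characters $\chi$ of $\Cl_{\mm}$.

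I would then invoke the standard analytic continuation of Hecke $L$-functions: $L(s,\chi)$ continues to an entire function when $\chi$ is nontrivial, while for the trivial character $\chi_0$ the function $L(s,\chi_0)$ continues meromorphically with a single simple pole at $s=1$ and no other poles. Because the key identity is a finite sum, it furnishes the meromorphic continuation of $\zeta(s,A)$ to all of $\C$. The only term that can contribute a pole is the $\chi_0$ term, namely $\abs{\Cl_{\mm}}^{-1}\overline{\chi_0(A)}\,L(s,\chi_0) = \abs{\Cl_{\mm}}^{-1}L(s,\chi_0)$, since $\chi_0(A)=1$ for every class $A$. This gives a simple pole at $s=1$, no other poles, and residue $\abs{\Cl_{\mm}}^{-1}\operatorname{Res}_{s=1}L(s,\chi_0)$, which is manifestly independent of $A$—exactly the assertion. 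To make the residue explicit one notes that $L(s,\chi_0)$ differs from the Dedekind zeta function only in the Euler factors removed at primes dividing $\cc$, so $L(s,\chi_0) = \zeta_K(s)\prod_{\fp \mid \cc}(1-N(\fp)^{-s})$, and the residue follows from the analytic class number formula.

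The genuine analytic content, and hence the main obstacle, is the continuation of the Hecke $L$-functions themselves, which is not formal. This is Hecke's theorem, established via an integral representation of each $L(s,\chi)$ in terms of theta functions attached to the ideal classes together with the theta transformation law coming from Poisson summation; the nontriviality of $\chi$ is precisely what produces the cancellation that removes the pole. In the write-up I would either cite this result directly or, since only the real quadratic case is needed later, sketch the theta-function argument there. Everything else—character orthogonality, assembling the finite linear combination, and extracting the pole and its residue—is then routine bookkeeping.
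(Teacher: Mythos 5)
Your proposal is correct and follows essentially the same route as the paper, whose proof is simply a citation of Neukirch (Chapter VII, Theorem 8.5) for the analytic continuation of Hecke $L$-functions, ``from which this theorem follows.'' The character-orthogonality inversion $\zeta(s,A) = \abs{\Cl_{\mm}}^{-1}\sum_{\chi}\overline{\chi(A)}L(s,\chi)$ that you spell out, together with isolating the pole in the trivial-character term, is exactly the routine deduction the paper leaves implicit.
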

\begin{proof}
See Neukirch \cite{neukirch}, Chapter VII, Theorem 8.5 for the corresponding statement about Hecke $L$-functions, from which this theorem follows.
\end{proof}
The pole at $s=1$ may be eliminated by considering the function $Z_A(s)$, defined as follows. The function $Z_A(s)$ is holomorphic everywhere.
\begin{defn}[Differenced ray class zeta function]
Let $R$ be the element of $C_{\mm}$ defined by 
\begin{equation}
R= \{a\OO_K : a \con -1 \Mod{\cc} \mbox{ and $a$ is positive at each place in $S$}\}.
\end{equation}
Define the \textbf{differenced zeta function of the ray class $A$} to be
\begin{equation}
Z_A(s) = \zeta(s,A) - \zeta(s,RA).
\end{equation}
\end{defn}

Hecke $L$-functions (of finite-order Hecke characters) are linear combinations of ray class zeta functions and are ubiquitous in modern number theory, largely because they have an Euler product.  Conversely, ray class zeta functions may be expressed as linear combinations of Hecke $L$-functions. 
\begin{defn}[$L$-function of a finite-order Hecke character]\label{defn:hecke}
Let $K$ be a number field and $\mm$ a modulus of $K$. Let $\chi : \Cl_\mm \to \C^\times$ be a group homomorphism. The Hecke $L$-function is
\begin{equation}
L(s,\chi) = \sum_{A \in \Cl_\mm} \chi(A) \zeta(s,A).
\end{equation}
\end{defn}
All our results and conjectures are stated using (differenced) ray class zeta functions rather than Hecke $L$-functions, as they are cleaner that way.
However, our computer calculations (see \cref{sec:egz}) rely on Magma's built-in algorithms for computing Hecke $L$-functions.

\subsection{A Stark conjecture over real quadratic base field}

The Existence Theorem \ref{thm:existence} does not provide a procedure for actually building the ray class field $L_\mm$.
Explicit constructions are known when the base field $K$ is $\Q$ or an imaginary quadratic field; however, it is not known how to construct ray class fields explicitly in general.

The Stark conjectures provide one approach to developing an explicit class field theory. The following conjecture, due to Stark \cite{stark3}, gives a conjectural generator for a ray class field over a real quadratic field, under certain conditions.

\begin{conj}[Stark conjecture, rank 1 real quadratic case]\label{conj:stark}
Let $K$ be a real quadratic number field with real embeddings $\rho_1$ and $\rho_2$.
Let $\cc$ be a nonzero ideal of the ring of integers of $K$ with the property that, if $\e \in \OO_K^\times$ such that $\e \con 1 \Mod{\cc}$, then $\rho_1(\e) > 0$.
Let $A$ be a ray ideal class in $\Cl_{\cc \infty_2}$.  Let $L_{\cc \infty_j}$ be the ray class field of $K$ modulo $\cc \infty_j$, and let $\tilde\rho_j$ be a choice of embedding of $L_{\cc \infty_j}$ extending $\rho_j$. (Here, $\tilde\rho_1(L_{\cc \infty_2})=\tilde\rho_2(L_{\cc \infty_1})$ is a real field, and $\tilde\rho_1(L_{\cc \infty_1})=\tilde\rho_2(L_{\cc \infty_2})$ is a complex [non-real] field.)
Then,
\begin{itemize}
\item[(1)] $Z_{A}'(0) = \log(\tilde\rho_1(\alpha_A))$ for some real algebraic unit $\alpha_A \in L_{\cc \infty_2}$, and furthermore $L_{\cc \infty_2} = K(\alpha_A)$.
\item[(2)] The units $\alpha_A$ are compatible with the Artin map $\Art : \Cl_{\cc \infty_2} \to \Gal\left(L_{\cc \infty_2}/K\right)$.  Specifically, $\alpha_A = \alpha_I^{\Art(A)}$, where $I \in \Cl_{\cc \infty_2}$ is the identity class.
\end{itemize}
\end{conj}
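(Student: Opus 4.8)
The plan begins with recognizing that this is the rank~1 abelian Stark conjecture over a real quadratic base field, which---as the introduction records---is open; so I do not expect a complete proof, and what follows is a map of what a proof must accomplish and the point at which it must stall with current technology. The one genuinely tractable ingredient is the analytic half of part (1): that $Z_A(s)$ has a zero of order exactly $1$ at $s=0$. I would extract this from the functional equation of the Hecke $L$-functions into which $\zeta(s,A)$ decomposes (\Cref{defn:hecke}), reading the order of vanishing at $s=0$ off the archimedean Gamma factors. Because the modulus $\cc\infty_2$ leaves the real place $\rho_1$ unramified, exactly one archimedean factor forces a simple zero; this is the standard rank computation and poses no obstacle.

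The substance of part (1) is the claim that $\exp(Z_A'(0))$ is not just a real number but a real \emph{algebraic unit} in $L_{\cc\infty_2}$ that generates it over $K$. In every case where Stark's conjecture is a theorem, one succeeds by having an \emph{independent, explicit analytic construction} of the units in hand and then matching it to the $L$-value: over $\Q$ the units are cyclotomic, such as $1-\zeta$, matched through Dirichlet's formula; over an imaginary quadratic field they are elliptic units, matched through the Kronecker limit formula and shown to be algebraic via complex multiplication. The strategy I would attempt is the real quadratic analogue---produce a candidate unit from a real-analytic limit formula for $Z_A'(0)$ (in the style of Hecke, of Shintani's cone decompositions, or of Zagier's work on real quadratic fields), and then prove that the resulting number (a) is algebraic, (b) is a unit, and (c) generates $L_{\cc\infty_2}$.

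Step (a) is the main obstacle, and it is where the argument genuinely breaks: there is no known analogue over a real quadratic field of the elliptic or cyclotomic functions that render algebraicity manifest. The Shintani--Zagier expressions for $Z_A'(0)$ are real-analytic and furnish no a priori reason for $\exp(Z_A'(0))$ to satisfy a polynomial over $\Q$; supplying such a reason is essentially Hilbert's 12th problem for real quadratic fields, which is exactly what is unsolved. Granting algebraicity, I would deduce (b) by computing the archimedean absolute values of all Galois conjugates of $\alpha_A$---the conjecture expresses these through the family $\{Z_{A'}'(0)\}_{A'}$---and (c) from the non-degeneracy of the resulting regulator, which forces $\alpha_A$ to have degree $|\Cl_{\cc\infty_2}|$ over $K$.

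Part (2), compatibility with the Artin map, is a Galois-equivariance statement, $\alpha_A=\alpha_I^{\Art(A)}$. Even granting that the units exist, I would attack it by showing that the assignment $A\mapsto Z_A'(0)$ intertwines the translation action of $\Cl_{\cc\infty_2}$ on ray classes with the Galois action on $L_{\cc\infty_2}$, which reduces to a norm/distribution relation among the analytic values. In the proven cases this equivariance is inherited from the modular transformation behaviour of the functions producing the units; absent those functions here, this step is again contingent on first resolving the algebraicity obstacle in (a). In sum, the plan reduces the conjecture to an explicit analytic construction of units in real quadratic ray class fields, and that construction is precisely the open problem.
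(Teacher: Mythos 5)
There is nothing to compare here: the paper offers no proof of \Cref{conj:stark}, which is stated verbatim as Stark's conjecture and cited to \cite{stark3}, and the introduction explicitly records that the Stark conjectures remain open over every base field except $\Q$ and imaginary quadratic fields. Your refusal to manufacture a proof is therefore the correct call, and your roadmap---a simple zero of $Z_A(s)$ at $s=0$ read off the archimedean factors of the Hecke $L$-functions into which it decomposes, with the algebraicity of $\exp(Z_A'(0))$ as the genuine obstruction, tantamount to Hilbert's 12th problem for real quadratic fields---matches both the paper's framing and the state of the literature, including the role of cyclotomic and elliptic units in the two proven cases.

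Two corrections to your contingent steps, though, since they overstate what would follow from algebraicity alone. First, your proposed deduction of (b): knowing the archimedean absolute values of all Galois conjugates of $\alpha_A$ yields at best $\abs{N_{L_{\cc\infty_2}/\Q}(\alpha_A)} = 1$ (via $\sum_{A} Z_A(s) = 0$, since $A \mapsto RA$ permutes the ray classes), but norm $\pm 1$ does not imply unit-ness without integrality---$(3+4i)/5$ has norm $1$ in $\Q(i)$ and is not a unit---and no archimedean data can control the finite valuations of $\alpha_A$. This is precisely why Stark's formulation builds unit-ness into the conjecture as a separate assertion rather than deriving it. Second, a smaller caveat on your rank computation: ``order exactly $1$'' at $s=0$ for each character surviving the differencing (those with $\chi(R) = -1$, i.e.\ ramified at $\infty_2$) is only automatic for primitive $\chi$; imprimitive characters can acquire extra zeros at $s=0$ from removed Euler factors $1 - \chi(\fp)N\fp^{-s}$ at primes $\fp \mid \cc$ with $\chi(\fp) = 1$. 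This does not affect the claim that $Z_A'(0)$ is the conjecturally relevant leading term, but it is the kind of detail a careful write-up of even the ``easy'' analytic half must handle.
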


\section{Towards an infinite family of Heisenberg SIC-POVMs}\label{sec:conjs}

We now state our main conjectures and results. Our first conjecture predicts the existence of a Galois orbit of real algebraic units in the ray class field $L_{(d)\infty_2}$ satisfying certain strong conditions, for $d$ an odd prime such that $d \con 2 \Mod{3}$. We will show in \Cref{thm:main} that these conditions imply the existence of a Heisenberg SIC-POVM in dimension $d$.

\begin{conj}\label{conj:family}
Let $d$ be an odd prime such that $d \con 2 \Mod{3}$. Let $\Delta = (d+1)(d-3)$ and $K = \Q(\sqrt{\Delta})$, 
and consider the class group $\Cl_{(d)\infty_2}$. 
With indices $m,n \in \Z/d\Z$, $(m,n) \neq (0,0)$, let 
\begin{equation}
A_{m,n} = \{\alpha\OO_K : \alpha \con m+n\sqrt{\Delta} \Mod{d} \mbox{ and } \rho_2(\alpha)>0\} \in \Cl_{(d)\infty_2},
\end{equation}
and let $\Art : \Cl_{(d) \infty_2} \to \Gal\left(L_{(d) \infty_2}/K\right)$ denote the Artin map of class field theory.
Then, there is a real algebraic unit $\alpha$ such that the ray class field $L_{(d)\infty_2} = K(\alpha)$ and such that its (real) Galois conjugates $\alpha_{m,n} := \alpha^{\Art(A_{m,n})}$ over the Hilbert class field $L_{(1)}$ have the following properties.
\begin{itemize}
\item[(1)] $\alpha_{-m,-n} = \alpha_{m,n}^{-1}$.
\item[(2)] The $\alpha_{m,n} \con 1 \Mod{\fp}$ for all prime ideals $\fp$  of $\OO_{L_{(d)\infty_2}}$ dividing $d\OO_{L_{(d)\infty_2}}$.
\item[(3)] The roots of $(d+1)x^2 = \alpha_{m,n}$ are in $L_{(d)\infty_2}$.
\item[(4)] Fix a choice of $\fp$  dividing $d\OO_{L_{(d)\infty_2}}$, and let $\nu_{m,n}$ be the unique root of 
\begin{equation}\label{eq:sqroot} 
(d+1)x^2 = \alpha_{m,n}
\end{equation}
satisfying $\nu_{m,n} \con 1 \Mod{\fp}$. Let $\nu_{0,0} = 1$. Then, the matrix
\begin{equation}\label{eq:M}
M = \frac{1}{d} \sum_{m=0}^{d-1} \sum_{n=0}^{d-1} \nu_{m,n} D_{-m,-n} 
\end{equation}
is idempotent (that is, $M^2 = M$) and rank 1 (that is, all the rank 2 minors of $M$ vanish). (See \cref{eq:Dmn} for the definition of $D_{m,n}$.)
\end{itemize}
\end{conj}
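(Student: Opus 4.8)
The plan is to realize the family $\alpha_{m,n}$ as the Stark units supplied by \Cref{conj:stark} applied to $K=\Q(\sqrt{\Delta})$ with conductor $\cc=(d)$, and then to check the four listed properties in turn. First I would verify that the pair $\bigl(K,(d)\bigr)$ satisfies the hypothesis of \Cref{conj:stark}: that every $\e\in\OO_K^\times$ with $\e\con 1\Mod{(d)}$ has $\rho_1(\e)>0$. Granting this, \Cref{conj:stark} produces a real algebraic unit $\alpha=\alpha_I\in L_{(d)\infty_2}$ with $L_{(d)\infty_2}=K(\alpha)$, together with the Galois-compatible family $\alpha_A=\alpha_I^{\Art(A)}$ satisfying $Z_A'(0)=\log\tilde\rho_1(\alpha_A)$. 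Setting $\alpha_{m,n}:=\alpha_{A_{m,n}}=\alpha^{\Art(A_{m,n})}$ then gives exactly the objects in the statement, and the generation claim $L_{(d)\infty_2}=K(\alpha)$ is immediate.

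Property (1) I expect to be a formal consequence of the antisymmetry of the differenced zeta function. The class $R\in\Cl_{(d)\infty_2}$ is precisely $A_{-1,0}$, so $R\cdot A_{m,n}=A_{-m,-n}$ and $R^2=I$. From $Z_{RA}(s)=\zeta(s,RA)-\zeta(s,R^2A)=-Z_A(s)$ one gets $Z_{RA}'(0)=-Z_A'(0)$, hence $\tilde\rho_1(\alpha_{RA})=\tilde\rho_1(\alpha_A)^{-1}$, and since $\tilde\rho_1$ is injective this forces $\alpha_{-m,-n}=\alpha_{m,n}^{-1}$. Properties (2) and (3) are more arithmetic. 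For (2), the congruence $\alpha_{m,n}\con 1\Mod{\fp}$ for $\fp\mid d$ should follow from the refined, integral form of the rank $1$ Stark conjecture, which predicts that the Stark unit is $\con 1$ modulo the primes dividing the conductor; I would make this precise using the factorization of $d\OO_{L_{(d)\infty_2}}$ together with the Artin compatibility in part (2) of \Cref{conj:stark}. For (3), I would show that $K\bigl(\sqrt{\alpha_{m,n}/(d+1)}\bigr)\subseteq L_{(d)\infty_2}$ by a ramification analysis of the quadratic extension, exploiting that $d+1\mid\Delta$ and that $L_{(d)\infty_2}/K$ is unramified away from $d$ and $\infty_2$.

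The main obstacle is unmistakably property (4): that the matrix $M$ assembled from the canonically chosen roots $\nu_{m,n}$ of \cref{eq:sqroot} is a rank-one idempotent. Expanding $M^2=M$ in the Heisenberg basis and applying the multiplication rule for the displacement operators $D_{m,n}$ reduces this to a system of quadratic identities among the overlap phases $\nu_{m,n}$---precisely the defining equations of a Heisenberg SIC-POVM fiducial. Nothing in the Stark-theoretic construction of the $\alpha_{m,n}$, and nothing in properties (1)--(3), is known to force these identities; proving them appears to require genuinely new input tying the arithmetic of Stark units to the geometry of equiangular lines, which is exactly what keeps the statement conjectural rather than a theorem. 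Accordingly, the realistic deliverable is to establish (1)--(3) conditional on \Cref{conj:stark}, reduce (4) to the explicit SIC equations in the $\nu_{m,n}$, and then verify those equations numerically---and, in small dimensions, exactly---rather than to prove (4) in general.
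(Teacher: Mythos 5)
You have correctly diagnosed the overall situation: the statement is a conjecture, and the paper proves no part of it unconditionally. What the paper actually does is (a) observe that point (1) follows from \Cref{conj:stark} via the identity $Z_{RA}(s) = -Z_A(s)$ (your derivation---$R = A_{-1,0}$, $R\cdot A_{m,n} = A_{-m,-n}$, $Z_{RA}'(0) = -Z_A'(0)$, injectivity of $\tilde\rho_1$---is exactly this argument, spelled out), (b) explicitly state that points (2), (3), and (4) are, to the author's knowledge, \emph{new} predictions about Stark units, and (c) verify all four properties numerically, and in fact exactly, for $d = 5, 11, 17, 23$ in \cref{sec:egz}. Your identification of (4) as the inaccessible core, and your reduction of $M^2 = M$ via the displacement-operator multiplication law to the SIC fiducial equations in the $\nu_{m,n}$, also match the paper's stance (cf.\ \Cref{thm:main}, which runs the implication in the direction conjecture $\Rightarrow$ SIC). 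One detail you flag but do not supply: the hypothesis of \Cref{conj:stark} for $\cc = (d)$---that every unit $\con 1 \Mod{d}$ is positive at $\rho_1$---is furnished by \Cref{lem:minusunits}, which shows any such unit is $\e_d^{3k}$ with $\e_d$ totally positive.

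The genuine gap is your treatment of (2) and (3), which you claim should follow from known refinements of the rank $1$ Stark conjecture; the paper asserts the opposite, and for good reason. For (2), there is no established integral refinement predicting that the Stark unit is $\con 1$ modulo the primes above the conductor; in the prototype case $K = \Q$ with modulus $p\infty$, the analogous quantities are built from $1 - \zeta_p$, which is certainly not $\con 1$ above $p$, so any such congruence must be special to this setting---it is a new prediction, not a corollary. For (3), the standard refinement (Stark's abelian condition) predicts only that adjoining $\sqrt{\e}$ yields an extension abelian over $K$, which is strictly weaker than the claim $\sqrt{\alpha_{m,n}/(d+1)} \in L_{(d)\infty_2}$; and your proposed ramification analysis does not close this distance, because a quadratic extension of $L_{(d)\infty_2}$ unramified outside the primes above $d$ and $\infty_2$ need not lie \emph{inside} $L_{(d)\infty_2}$---its conductor over $K$ could be divisible by $d^2$, so ramification alone only places the square root in a ray class field of higher level. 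The scalar $d+1$ is also a real obstruction: $\sqrt{d+1} \notin K$ in general (for $d=5$, $K = \Q(\sqrt{3})$ and $\sqrt{6} \notin K$), so (3) entangles the Stark unit with a specific non-square constant in a way the Stark framework says nothing about. The accurate deliverable is therefore narrower than you state: only (1) is derivable conditionally on \Cref{conj:stark}; (2), (3), and (4) must all be left as new conjectural predictions, supported by the exact verifications in dimensions $5$, $11$, $17$, and $23$.
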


The condition that $d$ is prime and $d \con 2 \Mod{3}$ is necessary because \cref{eq:Amn} does not make sense for all $(m,n) \neq (0,0)$ without it. For general $d$, the principal subgroup of $\Cl_{(d)\infty_2}$ is represented by those $A_{m,n}$ such that $(m+n\sqrt{\Delta})$ is relatively prime to $(d)$. The condition that $d$ is prime and $d \con 2 \Mod{3}$ is equivalent to $(d)$ being prime in $\OO_K$, that is, to $(m+n\sqrt{\Delta})$ being relatively prime to $(d)$ whenever $(m,n) \con (0,0) \Mod{d}$.

Our second conjecture provides an explicit analytic formula for a set of real numbers that we expect to be Galois conjugate algebraic units with the desired properties. (The $Z_{A}'(0)$ are known to be real numbers.)
\begin{conj}\label{conj:unitsL}
A unit $\alpha$ satisfying \Cref{conj:family} and its Galois conjugates over $K$ may be constructed as Stark units
\begin{equation}\label{eq:alpha}
\alpha^{\Art(A)} = \exp\left(Z_{A}'(0)\right),
\end{equation}
for all $A \in \Cl_{(d)\infty_2}$.
\end{conj}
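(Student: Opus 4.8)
The plan is to reduce Conjecture~\ref{conj:unitsL} to the rank~1 real quadratic Stark conjecture (Conjecture~\ref{conj:stark}) and then verify that the Stark units it produces satisfy each of the four properties demanded in Conjecture~\ref{conj:family}. First I would check that the hypotheses of Conjecture~\ref{conj:stark} are met for the data $K = \Q(\sqrt{\Delta})$, $\cc = (d)$, and $S = \{\infty_2\}$. The formal hypotheses are immediate, so the real point is the sign condition: every unit $\e \in \OO_K^\times$ with $\e \con 1 \Mod{(d)}$ must have $\rho_1(\e) > 0$. Since $d$ is prime with $d \con 2 \Mod 3$ the ideal $(d)$ is inert in $\OO_K$, so I would verify this condition by analyzing the image of $\OO_K^\times$ in $(\OO_K/(d))^\times \isom \F_{d^2}^\times$, using the presence of the Zauner unit $\e_d$ with $\e_d^3 \con 1 \Mod d$ to pin down which reductions of units congruent to $1$ can occur.

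Granting Conjecture~\ref{conj:stark}, part~(1) gives $\exp(Z_A'(0)) = \tilde\rho_1(\alpha_A)$ for a real algebraic unit $\alpha_A$ with $L_{(d)\infty_2} = K(\alpha_A)$, and part~(2) gives the Artin compatibility $\alpha_A = \alpha_I^{\Art(A)}$. Setting $\alpha := \alpha_I$ and $\alpha_{m,n} := \alpha^{\Art(A_{m,n})}$ already supplies the existence of the generator and the indexing of its conjugates required by Conjecture~\ref{conj:family}. Property~(1) then follows from the structure of the differenced zeta function: the class $R$ is an involution, since $R^2$ is represented by elements $\con 1 \Mod{(d)}$ that are positive at $\infty_2$, i.e.\ the identity class; and a direct computation shows $A_{-m,-n} = R\,A_{m,n}$. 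Hence $Z_{A_{-m,-n}}(s) = Z_{R A_{m,n}}(s) = -Z_{A_{m,n}}(s)$, so that $\alpha_{-m,-n} = \exp(Z_{A_{-m,-n}}'(0)) = \exp(-Z_{A_{m,n}}'(0)) = \alpha_{m,n}^{-1}$, as required.

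The remaining properties are where the real work lies. For~(2), the congruence $\alpha_{m,n} \con 1 \Mod{\fp}$ for $\fp \mid d$ I would approach by relating $Z_A'(0)$ modulo the prime above $d$ to a reduction-theoretic statement about the Stark unit, exploiting that the ramification of $L_{(d)\infty_2}/K$ is concentrated at $d$ and $\infty_2$. For~(3), the existence inside $L_{(d)\infty_2}$ of the roots of $(d+1)x^2 = \alpha_{m,n}$ should follow from a Kummer-theoretic analysis once~(2) is in hand, the signs of the square roots being fixed by the congruence data via \cref{eq:sqroot}. The hard part---and the reason this is stated as a conjecture rather than proved---will be property~(4): that the matrix $M$ assembled from the signed square roots $\nu_{m,n}$ is a rank~1 idempotent, i.e.\ an honest SIC fiducial projector. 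This is an algebraic identity among the $d^2$ overlap phases that does not appear to follow formally from the Stark conjecture; it encodes the full content of the SIC existence problem. My expectation is that~(4) can be established unconditionally only in a fixed dimension by exact computation (as carried out for $d = 5, 11, 17, 23$), and that a general proof would require a genuinely new relation between Stark units and the Clifford-group combinatorics of the displacement operators $D_{m,n}$.
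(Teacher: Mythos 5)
This statement is a conjecture that the paper does not (and cannot currently) prove: the paper's only justification is precisely your reduction, namely that granting \Cref{conj:stark} one obtains the generator $\alpha$, the Artin-equivariance $\alpha_A = \alpha_I^{\Art(A)}$, and property (1) of \Cref{conj:family} via $Z_{RA}(s) = -Z_A(s)$ (with the sign hypothesis on units $\con 1 \Mod{d}$ supplied by \Cref{lem:minusunits}, which your $(\OO_K/(d))^\times$ analysis would reprove), while properties (2)--(4) are flagged as genuinely new predictions about Stark units, checked only numerically for $d = 5, 11, 17, 23$. Your proposal is correct and takes essentially the same approach as the paper, and you rightly identify that property (4) does not follow formally from Stark's conjecture but encodes the full SIC existence problem.
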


\Cref{conj:stark}, due to Stark, predicts that $\exp\left(Z_{A}'(0)\right) = \alpha^{\Art(A)}$ for an algebraic unit $\alpha$ generating $L_{(d)\infty_2}$ over $K$. Point (1) of \Cref{conj:family} follows, because $Z_{RA}(s) = - Z_A(s)$. Points (2), (3), and (4) of \Cref{conj:family} provide, to our knowledge, new predictions about Stark units.

There are several analytic formulas for the derivative zeta values $Z_{A}'(0)$ appearing on the right-hand side of \cref{eq:alpha}.
Shintani's Kronecker limit formula expresses $\exp\left(Z_{A}'(0)\right)$ as a product of special values of Barnes's double gamma function \cite{shintanik,shintanicertain}.
A different formula appears in Chapter 4 of the author's PhD thesis \cite{thesis}.

We state \Cref{conj:family} and \Cref{conj:unitsL} as two separate conjectures due to the possibility that it may be possible to prove the first without proving the second. One route by which this might be done is through the use of $p$-adic zeta functions rather than Archimedian zeta functions. The $p$-adic approach seems hopeful because $p$-adic analogues of the Stark conjectures have been proven in some cases. 
At least, the $\fp$-adic condition (2) for corresponding $d$-adic special values looks potentially amenable to proof.

We now prove a lemma about the structure of the class fields over $K$, which we require for our main result, \Cref{thm:main}.

\begin{lem}\label{lem:minusunits}
Let $K = \Q(\sqrt{\Delta})$, where $\Delta = (d+1)(d-3)$ and $d \geq 4$. Let $\e_d = \frac{(d-1)+\sqrt{\Delta}}{2}$.
If $\eta \in \OO_K^\times$ and $\eta \con 1 \Mod{d}$, then $\eta = \e_d^{3k}$ for some $k \in \Z$, and in particular $\e_d$ is totally positive.
It follows that the class fields $L_{(d)}$, $L_{(d)\infty_1}$, $L_{(d)\infty_2}$, and $L_{(d)\infty_1\infty_2}$ are all distinct.
\end{lem}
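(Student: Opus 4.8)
The plan is to first determine the group $U_1 := \{\eta \in \OO_K^\times : \eta \con 1 \Mod{d}\}$ of units congruent to $1$ modulo $d$ exactly—showing $U_1 = \langle \e_d^3\rangle$ and that $\e_d$ is totally positive—and then to read off the distinctness of the four class fields from the resulting triviality of the sign map on $U_1$. To set up, I note that $\e_d$ is a root of $x^2 - (d-1)x + 1$, so $N(\e_d)=1$ and $\e_d + \e_d^{-1} = d-1$. Total positivity is immediate: both $\e_d = \tfrac{(d-1)+\sqrt{\Delta}}{2}$ and its conjugate $\tfrac{(d-1)-\sqrt{\Delta}}{2}$ are positive because $\sqrt{\Delta} < d-1$ (as $\Delta = (d-1)^2 - 4$); the same estimate gives $1 < \e_d < d$. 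Reducing the recurrence $\e_d^2 = (d-1)\e_d - 1 \con -\e_d - 1 \Mod{d}$ yields $\e_d^3 \con \e_d(-\e_d-1) \con -(-\e_d-1)-\e_d \con 1 \Mod{d}$, so $\e_d^3 \in U_1$.

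The crux is the reverse inclusion $U_1 \subseteq \langle \e_d^3\rangle$, and this is the step I expect to be the main obstacle. Since $-1 \not\con 1 \Mod{d}$ for $d \ge 4$, the group $U_1$ is torsion-free, hence infinite cyclic, say $U_1 = \langle \eta_0\rangle$ with $\eta_0 > 1$; as $\e_d^3 \in U_1$ we have $\e_d^3 = \eta_0^{\,t}$ for some integer $t \ge 1$, and I must show $t=1$. The key observation is a congruence-versus-size squeeze. For any $\eta \in U_1$, both $\eta - 1$ and $\bar\eta - 1$ lie in $d\OO_K$, so their product $(\eta-1)(\bar\eta-1) = N(\eta) - \Tr(\eta) + 1$ lies in $d^2\Z$. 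Because $N(\eta) \con 1 \Mod{d}$ forces $N(\eta)=+1$ (as $N(\eta)=\pm 1$ and $d \ge 3$), this gives $\Tr(\eta) \con 2 \Mod{d^2}$. Applied to $\eta_0$, whose trace exceeds $2$, this yields $\Tr(\eta_0) \ge d^2 + 2$ and hence $\eta_0 > d^2$. On the other hand $\e_d^3 < d^3$. Therefore $d^{2t} < \eta_0^{\,t} = \e_d^3 < d^3$, forcing $2t < 3$, so $t = 1$ and $U_1 = \langle \e_d^3\rangle$. Since $\e_d$ is totally positive, every element of $U_1$ is totally positive.

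For the statement about class fields I would feed this total positivity into the standard exact sequence for ray class groups. All four moduli share the finite part $(d)$, and the unit group of totally positive units congruent to $1$ modulo $d$ equals $U_1$ for each of them (adding positivity conditions at $\infty_1$ or $\infty_2$ does not shrink $U_1$). Using $\#\Cl_{\mm} = h_K\, 2^{|S|}\,\#(\OO_K/(d))^\times / [\OO_K^\times : U_1]$, the four class numbers are $c, 2c, 2c, 4c$, so the degrees over $K$ are $1,2,2,4$ times that of $L_{(1)}$. The kernel of $\Cl_{(d)\infty_1\infty_2} \twoheadrightarrow \Cl_{(d)}$ is $\{\pm 1\}^2 / \operatorname{sgn}(U_1) \cong (\Z/2\Z)^2$, so $\Gal(L_{(d)\infty_1\infty_2}/L_{(d)}) \cong (\Z/2\Z)^2$.

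Finally, I would identify $L_{(d)\infty_1}$ and $L_{(d)\infty_2}$ as the fixed fields of the two distinct coordinate (inertia) subgroups of $(\Z/2\Z)^2$—the triviality of $\operatorname{sgn}(U_1)$ is exactly what makes the inertia group of each infinite place nontrivial and makes these two subgroups distinct. Consequently $L_{(d)}$ is properly contained in each of $L_{(d)\infty_1}$ and $L_{(d)\infty_2}$, these two fields differ (they ramify at different real places, consistent with \Cref{thm:existence}), and both are properly contained in $L_{(d)\infty_1\infty_2}$; hence all four fields are distinct. The only delicate point is the squeeze in the second paragraph; the rest is bookkeeping with the ray class exact sequence.
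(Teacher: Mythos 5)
Your proof is correct, and the heart of it runs along a genuinely different line from the paper's. Both arguments reduce to showing that the group $U_1$ of units $\con 1 \Mod{d}$ is exactly $\langle \e_d^3 \rangle$, but the paper takes the minimal unit $\eta > 1$ in $U_1$, writes $\eta^n = \e_d^3$, and eliminates cases: $n \geq 3$ is ruled out by the trace bound $\eta + \eta' \leq \e_d + \e_d' = d-1$ (incompatible with $\Tr(\eta) \con 2 \Mod{d}$ and $\Tr(\eta) > 2$), while $n = 2$ forces $\e_d$ to be a square in $K$, which happens exactly when $d+1$ is a perfect square, whereupon an explicit computation gives $\eta \con -d_0 \Mod{d}$, a contradiction. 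Your mod-$d^2$ squeeze replaces both cases at once: from $(\eta - 1)(\bar{\eta} - 1) \in d^2\Z$ and $N(\eta) = 1$ you extract $\Tr(\eta) \con 2 \Mod{d^2}$, hence the generator $\eta_0$ of $U_1$ satisfies $\eta_0 > d^2$, and since $\e_d^3 < d^3$ the exponent $t$ in $\e_d^3 = \eta_0^{\,t}$ must be $1$. This is uniform in $d$ --- no special sub-case when $d+1$ is a square --- because the congruence modulo $d^2$ is strictly stronger information than the congruence modulo $d$ that the paper uses; what the paper's route buys in exchange is an explicit identification of the exceptional dimensions where $\e_d = \e_{d_0}^2$, which your argument never needs to see. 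On the class-field half, the paper compresses everything into ``it follows that the class groups are all distinct,'' whereas you carry out the implicit bookkeeping correctly: total positivity of $U_1$ gives $U_{\mm,1} = U_1$ for all four moduli, the ray class number formula then yields orders $c, 2c, 2c, 4c$, and the two equal-degree fields $L_{(d)\infty_1}$ and $L_{(d)\infty_2}$ are separated as fixed fields of the two distinct coordinate (inertia) subgroups of the $(\Z/2\Z)^2$ kernel of $\Cl_{(d)\infty_1\infty_2} \surj \Cl_{(d)}$. So your write-up is both a valid proof and, at the crux, a cleaner one; the only cost is that it supplies less structural detail about when $\e_d$ admits a square root in $K$.
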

\begin{proof}
It suffices to prove that the minimal $\eta > 1$ in $\OO_K$ such that $\eta \con 1 \Mod{d}$ is $\e^3$; consider the minimal such $\eta$. We have $\eta^n = \e_d^3$ for some $n \in \Z$, $n > 0$.

Let $\eta'$ and $\e'$ denote the nontrivial Galois conjugates of $\eta$ and $\e$, respectively.
If $n \geq 3$, then $\eta + \eta' \leq \e + \e' = d-1$, so it's impossible to have $\eta \con 1 \Mod{d}$.

Thus, $n \leq 2$. 
Suppose $n = 2$.
Then $\e_d$ has a square root is $K$; it's straightforward to check that this happens exactly when $d+1$ is a square, in which case $\e_d = \e_{d_0}^2$ with $d = d_0^2-2d_0$.
Then
\begin{align}
\eta &= \e_d^3 \\
&= \frac{(d_0-1)(d-2)+d\sqrt{\Delta}}{2} \\
&\con -d_0 \Mod {d}.
\end{align}
Thus, $\eta \not\equiv 1 \Mod{d}$.

So we must have $n = 1$, which is what we wanted to prove.

It follows that the class groups $\Cl_{(d)}$, $\Cl_{(d)\infty_1}$, $\Cl_{(d)\infty_2}$, and $\Cl_{(d)\infty_1\infty_2}$ are all distinct.
Thus, by \Cref{thm:existence}, the class fields $L_{(d)}$, $L_{(d)\infty_1}$, $L_{(d)\infty_2}$, and $L_{(d)\infty_1\infty_2}$ are all distinct.
\end{proof}

We are now ready to prove that \Cref{conj:family} implies the existence of a Heisenberg SIC-POVM.

\begin{thm}\label{thm:main}
Let $d$ be an odd prime such that $d \con 2 \Mod{3}$.
Assume \Cref{conj:family}, and let $M$ be the matrix constructed therein.
Let $\sigma \in \Gal(L_{(d)\infty_2}/\Q)$ be any Galois automorphism \textit{not} fixing $K$; that is, $\sigma(\sqrt{\Delta}) = -\sqrt{\Delta}$. Then $\sigma(M) = vv^\ct$ for a fiducial vector $v$ of a Heisenberg SIC-POVM.
\end{thm}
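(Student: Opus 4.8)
The plan is to show that $\sigma(M)$ is a rank-one Hermitian idempotent of trace one---hence equal to $vv^\ct$ for a unit vector $v$---and that the Heisenberg orbit of $v$ is equiangular. First I would pin down the field and the action of $\sigma$ on the displacement operators. The entries of $M$ lie in the compositum of $L_{(d)\infty_2}$ (which contains every $\nu_{m,n}$ by conditions (2) and (3)) with $\Q(\zeta_{2d})$. Since $d\equiv 2\pmod 3$ is prime, $(d)$ is inert in $\OO_K$, which forces $d\nmid\Delta$ and hence $\sqrt{\Delta}\notin\Q(\zeta_{2d})$; thus $K$ and $\Q(\zeta_{2d})$ are linearly disjoint over $\Q$, and I may realize $\sigma$ as an automorphism of a Galois field containing everything that fixes every $2d$-th root of unity while sending $\sqrt{\Delta}\mapsto-\sqrt{\Delta}$. (A general $\sigma$ differs from this one by its action on roots of unity and by an element of $\Gal(L_{(d)\infty_2}/K)$, which corresponds to an extended Clifford symmetry and so carries fiducials to fiducials; hence it suffices to treat this $\sigma$.) Because $\sigma$ fixes the roots of unity occurring in the $D_{m,n}$, it fixes each displacement operator, so
\[
\sigma(M) = \frac{1}{d}\sum_{m,n}\sigma(\nu_{m,n})\,D_{-m,-n}.
\]

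Next, the rank-one idempotent structure transfers for free. The relation $M^2=M$ and the vanishing of all $2\times 2$ minors of $M$ (condition (4)) are polynomial identities in the entries of $M$ with rational coefficients, so applying the field automorphism $\sigma$ yields $\sigma(M)^2=\sigma(M)$ and the vanishing of the $2\times 2$ minors of $\sigma(M)$. Combined with $\Tr\sigma(M)=\sigma(\Tr M)=\sigma(\nu_{0,0})=1$, this shows $\sigma(M)$ is idempotent of rank exactly one and trace one. Once I also know that $\sigma(M)$ is Hermitian, it is the orthogonal projector onto a line and therefore equals $vv^\ct$ for a unit vector $v$, unique up to phase.

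The equiangularity rests on the arithmetic fact that complex conjugation on the complex field $\sigma(L_{(d)\infty_2})$ is realized, through $\sigma$, by the Artin automorphism $\Art(R)$---the statement that $R$ is the complex conjugation attached to the ramified real place $\infty_2$. Granting this, condition (1) gives $\overline{\sigma(\alpha_{m,n})}=\sigma(\alpha_{m,n}^{\Art(R)})=\sigma(\alpha_{-m,-n})=\sigma(\alpha_{m,n})^{-1}$, whence $|\sigma(\alpha_{m,n})|=1$ and $|\sigma(\nu_{m,n})|^2=|\sigma(\alpha_{m,n})|/(d+1)=1/(d+1)$. I would then compute, for $(m,n)\neq(0,0)$,
\[
\langle v, D_{m,n}v\rangle = \Tr\!\left(D_{m,n}\sigma(M)\right) = \frac{1}{d}\sum_{p,q}\sigma(\nu_{p,q})\,\Tr\!\left(D_{m,n}D_{-p,-q}\right) = \omega_{m,n}\,\sigma(\nu_{m,n}),
\]
where $\omega_{m,n}$ is a root of unity, since $\Tr(D_{m,n}D_{-p,-q})=0$ unless $(p,q)\equiv(m,n)\pmod d$. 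Hence $|\langle v, D_{m,n}v\rangle|^2=1/(d+1)$, and because Heisenberg covariance makes $|\langle D_{a,b}v, D_{c,d}v\rangle|$ depend only on $(a-c,b-d)$, the $d^2$ lines $\C D_{m,n}v$ are equiangular, i.e.\ $v$ is a fiducial vector.

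For Hermiticity, using $D_{-m,-n}^\ct=D_{m,n}$ one sees that $\sigma(M)^\ct=\sigma(M)$ is equivalent to $\overline{\sigma(\nu_{m,n})}=\sigma(\nu_{-m,-n})$ for all $(m,n)$. Both sides square to $\sigma(\alpha_{-m,-n})/(d+1)$ by the identity above, so they agree up to a sign, and I expect fixing this sign to be the main obstacle: it is exactly what the $\fp$-adic congruence of conditions (2) and (4) is designed to control. Resolving it requires checking that the normalization $\nu_{m,n}\equiv 1\pmod\fp$ is compatible with both complex conjugation and $\sigma$---concretely, that $\Art(R)$ and complex conjugation fix the chosen prime $\fp\mid d$ (which should follow from $(d)$ being inert in $K$ and the structure of its factorization in $L_{(d)\infty_2}/K$), so that $\overline{\sigma(\nu_{m,n})}$ and $\sigma(\nu_{-m,-n})$ are the same $\fp$-adically normalized square root. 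This square-root bookkeeping, rather than any of the linear algebra, is where the real work lies.
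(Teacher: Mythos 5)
Your proposal follows essentially the same route as the paper's proof: transfer the rank-one idempotent property through $\sigma$, identify complex conjugation on the image field with $\sigma\Art(R)\sigma^{-1}$ to force $\abs{\sigma(\alpha_{m,n})}=1$, and reduce Hermiticity to $\ol{\sigma(\nu_{m,n})}=\sigma(\nu_{-m,-n})$. The one step you explicitly leave open is genuine, but it closes in a few lines by exactly the mechanism you guessed. Once conjugation is identified with $\sigma\tau\sigma^{-1}$, where $\tau=\Art(R)$, the only question is whether $\tau(\nu_{m,n})=\nu_{-m,-n}$ with no sign (both sides square to $\alpha_{-m,-n}/(d+1)$). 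By class field theory, the inertia group at $(d)$ in $\Gal(L_{(d)\infty_2}/K)$ corresponds under the Artin map to the kernel of $\Cl_{(d)\infty_2}\to\Cl_{\infty_2}$, i.e.\ the image of $(\OO_K/d\OO_K)^\times$, which is the subgroup generated by the classes $A_{m,n}$; since $R=A_{-1,0}$ lies there, $\tau$ belongs to the decomposition group of \emph{every} $\fp\mid d\OO_{L_{(d)\infty_2}}$ and so fixes $\fp$. Hence $\tau(\nu_{m,n})\con 1\Mod{\fp}$ by condition (4), whereas $-\nu_{-m,-n}\con -1\Mod{\fp}$, and $1\not\con -1$ since $d$ is odd. (You do not separately need complex conjugation to fix $\fp$.) The same bookkeeping underlies the paper's step $\sigma(\nu_{m,n}\nu_{-m,-n})=\frac{1}{d+1}$, where a priori the product is $\pm\frac{1}{d+1}$ and the sign is fixed by $\pm\frac{1}{d+1}\con\pm 1\Mod{\fp}$; the paper passes over this silently, so your flag marks a point where your plan is more careful than the published argument, not a defect in it.

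Two smaller deviations are worth noting. For the action on roots of unity, the paper simply absorbs it into a relabeling $\sigma(D_{m,n})=D_{m,\lambda n}$ and adjusts $n$; your linear-disjointness reduction also works, but the justification ``an element of $\Gal(L_{(d)\infty_2}/K)$ corresponds to an extended Clifford symmetry'' is not right as stated---such a $g$ acts on matrix entries, not as a unitary. The correct argument is that all entries lie in $L_{(d)\infty_1\infty_2}$ (which contains $\Q(\zeta_d)$), a field abelian over the real field $K$, so complex conjugation, which fixes $K$, is central; hence $g$ commutes with conjugation, preserves Hermiticity and the overlap moduli, and permutes the $D_{m,n}$ up to phase. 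Finally, your appeal to the standard idelic fact that conjugation at the ramified real place $\infty_2$ is $\Art(R)$ is legitimate and in fact bypasses \Cref{lem:minusunits}, which the paper uses instead to identify the fixed field of $\tau$ as $L_{(d)}$ and $L_{(d)}$ as the real subfield of the complex field $L_{(d)\infty_1}$; either derivation is sound.
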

\begin{proof}
The $\alpha_{m,n}$ are Galois conjugate real algebraic units that generate $L_{(d)\infty_2}$ over $K$. 
By \Cref{lem:minusunits}, $L_{(d)} \neq L_{(d)\infty_2}$.
The $\alpha_{m,n}$ must not be totally real, because if they were, they would all lie in $L_{(d)}$. 
Moreover, by (1), $\alpha_{-m,-n} = \alpha_{m,n}^{-1}$, so the $\alpha_{m,n}$ are Galois conjugate to their inverses. 
Take $\tau \in \Gal(L_{(d)\infty_2}/K)$ such that $\tau(\alpha) = \alpha^{-1}$. Then, $\tau(\alpha_{m,n}) = \alpha_{m,n}^{-1}$ because $\Gal(L_{(d)\infty_2}/K)$ is abelian. Thus,
\begin{equation}
(\sigma\tau\sigma^{-1})(\sigma(\alpha_{m,n})) = \sigma(\alpha_{m,n})^{-1}.
\end{equation}

We have $\tau = \Art_R$, where 
\begin{equation}
R = \{\beta\OO_K : \beta\con 1 \Mod{d} \mbox{ and } \rho_2(\beta)>0 \} \in \Cl_{(d)\infty_2}.
\end{equation}
Thus, by class field theory, the fixed field of $\tau$ is $L_{(d)}$. It follows that the class field of $\sigma \tau \sigma^{-1} \in \Gal(L_{(d)\infty_1}/K)$ is also $L_{(d)}$. Thus, $\sigma \tau \sigma^{-1}$ must act by complex conjugation, because $L_{(d)}$ is the real subfield of the complex field $L_{(d)\infty_1}$.

Thus, $\sigma(\alpha_{m,n})$ lies on the unit circle. It follows that $\abs{\sigma(\nu_{m,n})} = \frac{1}{\sqrt{d+1}}$.
From the definition of $M$, we obtain $\Tr(MD_{m,n}) = \nu_{m,n}$. Thus,
\begin{align}
\abs{\Tr\left(\sigma(M)\sigma(D_{m,n})\right)}^2 &= \Tr\left(\sigma(M)\sigma(D_{m,n})\right)(\sigma\tau\sigma^{-1})\left(\Tr\left(\sigma(M)\sigma(D_{m,n})\right)\right) \\
&= \sigma\left(\Tr\left(MD_{m,n}\right)\tau\left(\Tr\left(MD_{m,n}\right)\right)\right) \\
&= \sigma\left(\nu_{m,n}\nu_{-m,-n}\right) \\
&= \frac{1}{d+1}.
\end{align}
Moreover, the action of $\sigma$ on the Heisenberg group is determined by its action of $\Q(\zeta_d)$, and there is some $\lambda \in \left(\Z/d\Z\right)^\times$ such that $\sigma(D_{m,n}) = \sigma(D_{m,\lambda n})$. So, by changing the value of $n$, we have
\begin{equation}\label{eq:fiddish}
\abs{\Tr\left(\sigma(M)D_{m,n}\right)}^2 = \frac{1}{d+1},
\end{equation}
for all $(m,n) \neq (0,0)$. Now write $\sigma(M) = vw^\ct$ for some $v,w \in \C^d$ with $w^\ct v = 1$, which is possible because $M$, and thus $\sigma(M)$, is rank 1 and idempotent. We will show that $\sigma(M)$ is in fact a Hermitian projector; that is, $w=v$. Conjugation-transposition acts on $\sigma(M)$ as follows:
\begin{equation}
\sigma(M)^\ct = \sum_m\sum_n \ol{\sigma(\nu_{m,n})}D_{-m,-\lambda n}^\ct
= \sum_m\sum_n \sigma(\nu_{-m,-n})D_{m,\lambda n}^\ct = \sigma(M).
\end{equation}
Thus, $w = v$. So \cref{eq:fiddish} may be rewritten as $\langle v, D_{m,n}v \rangle = \frac{1}{\sqrt{d+1}}$ for $(m,n) \neq (0,0)$; also, $\langle v, v \rangle = 1$. In other words, $v$ is a fiducial vector for a Heisenberg SIC-POVM in $\C^d$.
\end{proof}

\Cref{conj:family} has been verified numerically for $d=5$, $11$, $17$, and $23$. We discuss each case in turn in \cref{sec:egz}.

\section{Examples}\label{sec:egz}

In this section, we use \Cref{conj:family} and \Cref{conj:unitsL} to compute exact SIC-POVMs in dimensions $d = 5, 11, 17,$ and $23$. In each case, our solutions represent the minimal multiplet (in the sense of Appleby et. al. \cite{appleby2}), or (in the case of $d=23$) we expect them to. Originally, the case $d=5$ is due to Zauner \cite{zauner,zaunertrans}, $d=11$ is due to Scott and Grassl \cite{scott1}, and $d=17$ is due to Appleby et. al. \cite{applebyconstructing}. The case $d=23$ is new.

We numerically compute the numbers $\alpha_A^{\rm approx} := \exp(Z_A'(0))$ for the ideal classes $A \in \Cl_{(d)\infty_2}(K)$, using Magma's built-in functions for evaluation of Hecke L-functions. 
The $\alpha_A^{\rm approx}$ are expected to be Galois conjugate algebraic numbers (see \Cref{conj:stark}); in each case, we find that they indeed agree to high precision with Galois conjugate algebraic numbers $\alpha_A^{\rm exact}$. The $\alpha_A^{\rm exact}$ are found by using Mathematica's built-in lattice basis-reduction algorithms to compute the coefficients of a minimal polynomial $f_d(x)$ over $L_{(1)}$, the Hilbert class field of $K$.

Next, we verify conditions (2) and (3) of \Cref{conj:family} and compute the minimal polynomial $g_d(x)$ of a choice of $\nu_A$. The latter is done by factoring $f_d((d+1)x^2)$ over $L_{(1)}$ in Magma.
Finally, we verify that the $\nu_A$ satisfy condition (4) of \Cref{conj:family}.

Let $\tilde{g}_d(x)$ denote a polynomial obtained from $g_d(x)$ by applying a Galois automorphism $\sigma$ to the coefficients with the property that $\sigma\left(\sqrt{\Delta}\right) = -\sqrt{\Delta}$. The roots $\nu_{m,n}$ of $g_d(x)$ are real numbers, and we have already computed them not only as a set, but also as an orbit of the Galois group $G = \Gal(L_{(d)\infty_2}/L_{(1)})$. The roots of $\tilde{g}_d(x)$ are complex numbers of absolute value $\frac{1}{\sqrt{d+1}}$, and they are the overlaps of the SIC-POVM we are looking for. In order to compute a fiducial vector, we need to compute the Galois action on them as well.

In each case ($d= 5, 11, 17, 23$), the Galois group $G = \Gal(L_{(d)\infty_2}/L_{(1)}) \isom \left(\OO_{K}/d\OO_{K}\right)^\times/\langle \ol{\e_d} \rangle$ is a cyclic group of order $n=\frac{d-1}{3}$. Let $\gamma$ be a generator for $\left(\OO_{K}/d\OO_{K}\right)^\times$, so $\tau = \Art(A_{\gamma})$ is a generator for $G$. Fix $\nu = \nu_{1,0}$. We compute $\tau(\nu)$ in the monomial basis $\{1, \nu, \cdots \nu^{n-1}\}$ for $L_{(d)\infty_2}$ over $L_{(1)}$: 
\begin{equation}
\tau(\nu) = h_d(\nu) = \sum_{j=0}^{n-1} c_j \nu^j,
\end{equation}
where $c_j \in L_{(1)}$. This is done by solving the following linear system for the $c_j$.
\begin{equation}
\left(\begin{array}{c}
\tau(\nu) \\ \tau^2(\nu) \\ \vdots \\ \nu
\end{array}\right)
= 
\left(\begin{array}{cccc}
1 & \nu & \cdots & \nu^{n-1} \\
1 & \tau(\nu) & \cdots & \tau(\nu)^{n-1} \\
\vdots & \vdots & \ddots & \vdots \\
1 & \tau^{n-1}(\nu) & \cdots & \tau^{n-1}(\nu)^{n-1}
\end{array}\right)
\left(\begin{array}{c}
c_0 \\ c_1 \\ \vdots \\ c_{n-1}
\end{array}\right).
\end{equation}
The action on $\tau$ on the roots of $g_d(x)$ is given by applying the polynomial $h_d(x)$, so the action of a generator $\tilde{\tau}$ for $\Gal\left(L_{(d)\infty_1}/L_{(1)}\right)$ on the roots of $\tilde{g}_d(x)$ is given by applying
\begin{equation}
\tilde{h}_d(x) = \sum_{j=0}^{n-1} \tau(c_j)x^j.
\end{equation}

From the roots $\tilde{\nu}_{m,n}$ of $\tilde{g}_d(x)$, a fiducial vector $v_d$ is computed by applying $\sigma$ to \cref{eq:M}:
\begin{equation}\label{eq:Msigma}
v_d v_d^\ct = \sigma(M) = \frac{1}{d} \sum_{m=0}^{d-1} \sum_{n=0}^{d-1} \tilde{\nu}_{m,n} D_{-m,-\lambda n}.
\end{equation}
As we haven't kept track of the action of $\sigma$ on roots of unity, we must check different values of $\lambda \in \left(\Z/d\Z\right)^\times$ until we find one that works.

While this method produces an exact fiducial, we write down only a numerical fiducial in the examples that follow and in the accompanying text files. (We have already specified $v_d$ exactly up to a Galois action by writing down $g_d(x)$, and the minimal polynomials of its entries have prohibitively large coefficients in some cases.)

In the cases $d=5$, $11$, and $17$, the coefficients of $g_d(x)$ live in $K$, so $\tilde{g}_d(x)$ is determined by $g_d(x)$. In the case $d=23$, the coefficients of $g_d(x)$ live in a degree 2 extension of $K$, so there are two choices for $\tilde{g}_d(x)$. The two choices lead to two different $\PEC(23)$-orbits, 23b and 23f.

The ancillary text files accompanying this paper contain expressions for $f_d(x), g_d(x), h_d(x)$, and $v_d$; in the case $d=23$, expressions for related polynomials $g_d^\ast(x)$ and $h_d^\ast(x)$ are given instead, as explained in \cref{sec:d23}. The ancillary files are \texttt{dim5.txt}, \texttt{dim11.txt}, \texttt{dim17.txt}, \texttt{dim13.txt}, and \texttt{hstar23.txt}.

\subsection{The example $d=5$}

In dimension $d=5$, the corresponding quadratic discriminant is $\Delta = (d+1)(d-3) = 12$. The real quadratic base field is $K = \Q(\sqrt{\Delta}) = \Q(\sqrt{3})$, with Zauner unit $\e = \frac{d-1+\sqrt{\Delta}}{2} = 2+\sqrt{3}$ equal to the fundamental unit.

There is exactly one $\PEC(5)$-orbit of Heisenberg SIC-POVMs \cite{scott1}.

We construct ``the'' Heisenberg SIC-POVM in dimension $5$ explicitly using \Cref{conj:family} and \Cref{conj:unitsL}. The ray class group $\Cl_{d \infty_2}(K) \isom \Z/8\Z$. The numbers $\alpha_{A}^{\rm approx} = \exp(Z_{A}'(0))$ were computed to 50 digits of precision and were found to numerically satisfy the following polynomial $f_5(x)$ over $K$.  
\begin{align}
f_5(x) = 
x^8 &- (8 + 5\sqrt{3})x^7 + (53 + 30\sqrt{3})x^6 - (156 + 90\sqrt{3})x^5 + (225 + 130\sqrt{3})x^4\nn\\ 
   &- (156 + 90\sqrt{3})x^3 + (53 + 30\sqrt{3})x^2 - (8 + 5\sqrt{3})x + 1. \label{eq:deltapoly}
\end{align}
The minimal polynomial of the corresponding $\nu_A$ is
\begin{align}
g_5(x) = 
1296x^8 &- (648 + 1080\sqrt{3})x^7 + 648x^6 + (864 + 360\sqrt{3})x^5 - (540 + 360\sqrt{3})x^4\nn\\
   &+ (144 + 60\sqrt{3})x^3 + 18x^2 - (3 + 5\sqrt{3})x + 1.
\end{align}
The overlaps of the Heisenberg SIC are the roots of the conjugate polynomial
\begin{align}
\tilde{g}_5(x) = 
1296x^8 &- (648 - 1080\sqrt{3})x^7 + 648x^6 + (864 - 360\sqrt{3})x^5 - (540 - 360\sqrt{3})x^4\nn\\
   &+ (144 - 60\sqrt{3})x^3 + 18x^2 - (3 - 5\sqrt{3})x + 1.
\end{align}
A numerical approximation to a fiducial vector is
\begin{equation}
v_5 \approx
\left(\begin{array}{c}
0.24167903563278788347 \\
-0.42393763943145804455 - 0.23674553208033493698 i \\
\white{-}0.67406464953559540185 + 0.19581007881800632630 i \\
\white{-}0.04040992380093525849 - 0.19581007881800632630 i \\
\white{-}0.34218699574986301405 + 0.23674553208033493698 i
\end{array}\right).
\end{equation}

\subsection{The example $d=11$}

In dimension $d=11$, the corresponding quadratic discriminant is $\Delta = (d+1)(d-3) = 96$. The real quadratic base field is $K = \Q(\sqrt{\Delta}) = \Q(\sqrt{6})$, with Zauner unit $\e = \frac{d-1+\sqrt{\Delta}}{2} = 5+2\sqrt{6}$ equal to the fundamental unit.

There are three $\PEC(11)$-orbits of SIC-POVMs \cite{scott1}, denoted 11a, 11b, and 11c. They occur in two multiplets, \{11a, 11b\} and \{11c\}.

We construct the orbit 11c explicitly using \Cref{conj:family} and \Cref{conj:unitsL}. The ray class group $\Cl_{d \infty_2}(K) \isom \Z/40\Z$. The numbers $\alpha_A = \exp(Z_A'(0))$ were computed to 50 digits of precision and were found to numerically satisfy the following polynomial $f_{11}(x)$ over $K$. 
\begin{align}
f_{11}(x) =& \ (x^{40}+1) + (-106 - 44 \sqrt{6}) (x^{39}+x) + (10614 + 4334 \sqrt{6}) (x^{38}+x^2) \nn\\ &+ (-652115 - 266222 \sqrt{6}) (x^{37}+x^3) + (27825305 + 11359634 \sqrt{6}) (x^{36}+x^4) \nn\\ &+ (-877414856 - 358203120 \sqrt{6}) (x^{35}+x^5) \nn\\ &+ (21232702036 + 8668214302 \sqrt{6}) (x^{34}+x^6) \nn\\ &+ (-404105217077 - 164975264036 \sqrt{6}) (x^{33}+x^7) \nn\\ &+ (6148885983306 + 2510272190954 \sqrt{6}) (x^{32}+x^8) \nn\\ &+ (-75622522312964 - 30872765454828 \sqrt{6}) (x^9+x^{31}) \nn\\ &+ (756937617777704 + 309018488445524 \sqrt{6}) (x^{10}+x^{30}) \nn\\ &+ (-6189687857216636 - 2526929486213638 \sqrt{6}) (x^{11}+x^{29}) \nn\\ &+ (41399992237530827 + 16901476056189164 \sqrt{6}) (x^{12}+x^{28}) \nn\\ &+ (-226256732983154420 - 92368924446311502 \sqrt{6}) (x^{13}+x^{27}) \nn\\ &+ (1007258543741292244 + 411211578537502754 \sqrt{6}) (x^{14}+x^{26}) \nn\\ &+ (-3634879852543059214 - 1483933485842242424 \sqrt{6}) (x^{15}+x^{25}) \nn\\ &+ (10563883893311542549 + 4312687540103174716 \sqrt{6}) (x^{16}+x^{24}) \nn\\ &+ (-24534973105051444408 - 10016360826714109164 \sqrt{6}) (x^{17}+x^{23}) \nn\\ &+ (45162757813812803926 + 18437618670122548666 \sqrt{6}) (x^{18}+x^{22}) \nn\\ &+ (-65380387193394562674 - 26691431301568773124 \sqrt{6}) (x^{19}+x^{21}) \nn\\ &+ (74013773227204686051 + 30215996390786346646 \sqrt{6}) x^{20}. 
\end{align}
The minimal polynomial of the corresponding $\nu_A$ is
\begin{align}
g_{11}(x) =& \ (12^{20}x^{40} + 1) + (48 + 22 \sqrt{6}) (12^{19}x^{39} + x) + (1968 + 792 \sqrt{6}) (12^{18}x^{38} + x^2) 
\nn\\ &+ (25848 + 10560 \sqrt{6}) (12^{17}x^{37} + x^3) 
+ (-419472 - 171072 \sqrt{6}) (12^{16}x^{36}+x^4) 
\nn\\ &+ (-18892224 - 7714080 \sqrt{6}) (12^{15}x^{35} + x^5) 
+ (-181457280 - 74074176 \sqrt{6}) (12^{14}x^{34}+x^6) 
\nn\\ &+ (2141686656 + 874329984 \sqrt{6}) (12^{13}x^{33}+x^7)
\nn\\ &+ (62948109312 + 25698435840 \sqrt{6}) (12^{12}x^{32}+x^8) 
\nn\\ &+ (337583904768 + 137818340352 \sqrt{6}) (12^{11}x^{31}+x^9) 
\nn\\ &+ (-5182578339840 - 2115780175872 \sqrt{6}) (12^{10}x^{30}+x^{10}) 
\nn\\ &+ (-83855167709184 - 34233724293120 \sqrt{6}) (12^9x^{29}+x^{11})
\nn\\ &+ (-202894373007360 - 82831288725504 \sqrt{6}) (12^8x^{28}+x^{12})
\nn\\ &+ (4929898807885824 + 2012622741651456 \sqrt{6}) (12^7x^{27}+x^{13})
\nn\\ &+ (47257471319703552 + 19292782106492928 \sqrt{6}) (12^6x^{26}+x^{14})
\nn\\ & + (45726669189808128 + 18667833415925760 \sqrt{6}) (12^5x^{25}+x^{15})
\nn\\ &+ (-1783877902738587648 - 728265100678004736 \sqrt{6}) (12^4x^{24}+x^{16})
\nn\\ &+ (-11823467430652674048 - 4826910371186737152 \sqrt{6}) (12^3x^{23}+x^{17})
\nn\\ &+ (-11846897461773729792 - 4836475651119906816 \sqrt{6}) (12^2x^{22}+x^{18})
\nn\\ &+ (215144426763866603520 + 87832344582220677120 \sqrt{6}) (12x^{21}+x^{19})
\nn\\ &+ (1246186807345680482304 + 508753633031010385920 \sqrt{6}) x^{20}.
\end{align}
A numerical approximation to a fiducial vector is
\begin{equation}
v_{11} \approx
\left(\begin{array}{c}
0.31885501173446151953 \\
-0.00727092982813886277 - 0.27361988848296183641 i \\
-0.02661472547965484998 - 0.37021984761997660380 i \\
\white{-}0.26103622782810404567 - 0.17519492308655237763 i \\
-0.17130782441847984905 - 0.00498132225998709619 i \\
\white{-}0.115894663935244395576 + 0.023682353557509036148 \\
-0.20515008071739008667 - 0.15408741485673275946 i \\
-0.12010487681658182011 + 0.12297653825975158437 i \\
\white{-}0.24462906391022207471 - 0.16496679971724189093 i \\
\white{-}0.56644313698737356128 + 0.07879109944268139950 i \\
\white{-}0.16629395529532658791 - 0.07929717656822654842 i
\end{array}\right).
\end{equation}

\subsection{The example $d=17$}

In dimension $d=17$, the corresponding quadratic discriminant is $\Delta = (d+1)(d-3) = 252$. The real quadratic base field is $K = \Q(\sqrt{\Delta}) = \Q(\sqrt{7})$, with Zauner unit $\e = \frac{d-1+\sqrt{\Delta}}{2} = 8+3\sqrt{7}$ equal to the fundamental unit.

There are three $\PEC(17)$-orbits of SIC-POVMs \cite{scott1}, denoted 17a, 17b, and 17c. They occur in two multiplets, \{17a, 17b\} and \{17c\}.

We construct the orbit 17c explicitly using \Cref{conj:family} and \Cref{conj:unitsL}. The ray class group $\Cl_{d \infty_2}(\OO_{28}) \isom \Z/96\Z$. The numbers $\alpha_A = \exp(Z_A'(0))$ were computed to 50 digits of precision and were found to numerically satisfy the a polynomial $f_{17}(x)$ over $K$. 
Values of $f_{17}(x)$, $g_{17}(x)$, $h_{17}(x)$, and a fiducial vector $v_{17}$ may be found in the accompanying text file \texttt{dim17.txt}.

\subsection{The example $d=23$}\label{sec:d23}

In dimension $d=23$, the corresponding quadratic discriminant is $\Delta = (d+1)(d-3) = 480$. The real quadratic base field is $K = \Q(\sqrt{\Delta}) = \Q(\sqrt{30})$, with Zauner unit $\e = \frac{d-1+\sqrt{\Delta}}{2} = 11+2\sqrt{30}$ equal to the fundamental unit. Unlike our previous examples, $K$ is not a principal ideal domain, but rather has class number 2.

According to previous numerical searches, there appear to be six $\PEC(23)$-orbits of SIC-POVMs \cite{scott1}, denoted 23a, 23b, 23c, 23d, 23e, and 23f. 
We have explicitly computed one multiplet consisting of two $\PEC(23)$-orbits using \Cref{conj:family}; Markus Grassl has checked that our solutions are unitary equivalent to $\{23b, 23f\}$ \cite{grasslcorr}.
We expect, but have not shown, that the other four orbits form a multiplet \{23a, 23c, 23d, 23e\}.

The ray class group $\Cl_{d \infty_2}(K) \isom \Z/2\Z \times \Z/176\Z$. The numbers $\alpha_A^{\rm approx} = \exp(Z_A'(0))$ were computed to 500 digits of precision 
(overnight in 16 parallel threads). 
The values $\alpha_{m,n}^{\rm approx} = \exp(Z_{A_{m,n}}'(0))$ (corresponding to the principal ideal classes) were found to numerically satisfy the a polynomial $f_{23}(x)$ over the Hilbert class field $L_{(1)} = \Q(\sqrt{5},\sqrt{6})$. 

In this example, the number $\sqrt{d+1} = 2\sqrt{6}$ is in the Hilbert class field $L_{(1)}$, so we simplify our calculation somewhat by computing $g_{23}^\ast(x) = g_{23}\left(2\sqrt{6}x\right)$ instead of $g_{23}(x)$, and by computing a polynomial $h_{23}^\ast(x)$ for the action of a Galois group generator on a root of $g_{23}^\ast(x)$.
Values of $f_{23}(x)$, $g_{23}^\ast(x)$, and fiducial vectors $v_{\rm 23b}$ and $v_{\rm 23f}$ for 23b and 23f may be found in the accompanying text file \texttt{dim23.txt}; the value of $h_{23}^\ast(x)$ may be found in \texttt{h23star.txt}.


\bibliographystyle{plain}{}
\bibliography{references}

\begin{thebibliography}{10}

\bibitem{appleby4}
D.~Marcus Appleby.
\newblock Symmetric infomationally complete-positive operator valued measures
  and the extended {C}lifford group.
\newblock {\em J. Math. Phys.}, 46:052107, 2005.

\bibitem{appleby6}
D.~Marcus Appleby, Ingemar Bengtsson, Stephen Brierley, {\AA}sa Ericsson,
  Markus Grassl, and Jan-{\AA}ke Larsson.
\newblock Systems of imprimitivity for the {C}lifford group.
\newblock {\em Quantum Information \& Computation}, 14(3-4):339--360, 2014.

\bibitem{appleby5}
D.~Marcus Appleby, Ingemar Bengtsson, Stephen Brierley, Markus Grassl, David
  Gross, and Jan-{\AA}ke Larsson.
\newblock The monomial representations of the {C}lifford group.
\newblock {\em Quantum Information \& Computation}, 12(5-6):404--431, 2012.

\bibitem{applebyconstructing}
Marcus Appleby, Tuan-Yow Chien, Steven Flammia, and Shayne Waldron.
\newblock Constructing exact symmetric informationally complete measurements
  from numerical solutions.
\newblock {\em Journal of Physics A: Mathematical and Theoretical},
  51(16):165302, 2018.

\bibitem{appleby1}
Marcus Appleby, Steven Flammia, Gary McConnell, and Jon Yard.
\newblock Generating ray class fields of real quadratic fields via complex
  equiangular lines.
\newblock {\em arXiv preprint arXiv:1604.06098}, 2016.

\bibitem{appleby2}
Marcus Appleby, Steven Flammia, Gary McConnell, and Jon Yard.
\newblock {SIC}s and algebraic number theory.
\newblock {\em Foundations of Physics}, pages 1--18, 2017.

\bibitem{burns2004stark}
David Burns, Cristian Popescu, Jonathan Sands, David Solomon, et~al.
\newblock {\em Stark's Conjectures: Recent Work and New Directions: An
  International Conference on Stark's Conjectures and Related Topics, August
  5-9, 2002, Johns Hopkins University}, volume 358.
\newblock American Mathematical Soc., 2004.

\bibitem{detection}
Bin Chen, Tao Li, and Shao-Ming Fei.
\newblock General {SIC} measurement-based entanglement detection.
\newblock {\em Quantum Information Processing}, 14(6):2281--2290, 2015.

\bibitem{delsarte}
Philippe Delsarte, J.~M. Goethals, and J.~J. Seidel.
\newblock Bounds for systems of lines, and {J}acobi polynomials.
\newblock {\em Philips Research Reports}, 30:91, 1975.

\bibitem{play}
Christopher~A. Fuchs, Michael~C. Hoang, and Blake~C. Stacey.
\newblock The {SIC} question: history and state of play.
\newblock {\em arXiv preprint arXiv:1703.07901}, 2017.

\bibitem{qbism}
Christopher~A. Fuchs and R{\"u}diger Schack.
\newblock Quantum-{B}ayesian coherence.
\newblock {\em Reviews of modern physics}, 85(4):1693--1715, 2013.

\bibitem{G04}
Markus Grassl.
\newblock On {SIC-POVM}s and {MUB}s in dimension 6.
\newblock In {\em Proceedings of the ERATO Conference on Quantum Information
  Science}, page~60, 2004.

\bibitem{G05}
Markus Grassl.
\newblock Tomography of quantum states in small dimensions.
\newblock {\em Electronic Notes in Discrete Mathematics}, 20:151--164, 2005.

\bibitem{G08}
Markus Grassl.
\newblock Computing equiangular lines in complex space.
\newblock In {\em Mathematical Methods in Computer Science: Essays in Memory of
  Thomas Beth}, pages 89--104. Springer, 2008.

\bibitem{grasslcorr}
Markus Grassl.
\newblock Personal correpondence, July 2018.

\bibitem{hoggar2}
Stuart~G. Hoggar.
\newblock 64 lines from a quaternionic polytope.
\newblock {\em Geometriae Dedicata}, 69(3):287--289, 1998.

\bibitem{thesis}
Gene~S. Kopp.
\newblock {\em Indefinite Theta Functions and Zeta Functions}.
\newblock PhD thesis, University of Michigan, Ann Arbor, Michigan, USA, August
  2017.

\bibitem{neukirch}
J{\"u}rgen Neukirch.
\newblock {\em Algebraic Number Theory}, volume 322.
\newblock Springer Science \& Business Media, 2013.

\bibitem{renes}
Joseph~M. Renes, Robin Blume-Kohout, Andrew~J. Scott, and Carlton~M. Caves.
\newblock Symmetric informationally complete quantum measurements.
\newblock {\em Journal of Mathematical Physics}, 45(6):2171--2180, 2004.

\bibitem{scott2}
Andrew~J. Scott.
\newblock {SIC}s: {E}xtending the list of solutions.
\newblock {\em arXiv preprint arXiv:1703.03993}, 2017.

\bibitem{scott1}
Andrew~J. Scott and Markus Grassl.
\newblock Symmetric informationally complete positive-operator-valued measures:
  {A} new computer study.
\newblock {\em Journal of Mathematical Physics}, 51(4):042203, 2010.

\bibitem{scott3}
Andrew~J. Scott and Markus Grassl.
\newblock Fibonacci-{L}ucas {SIC-POVM}s.
\newblock {\em arXiv preprint arXiv:1707.02944}, 2017.

\bibitem{shintanik}
Takuro Shintani.
\newblock On {K}ronecker limit formula for real quadratic fields.
\newblock {\em Proceedings of the Japan Academy}, 52(7):355--358, 1976.

\bibitem{shintanicertain}
Takuro Shintani.
\newblock On certain ray class invariants of real quadratic fields.
\newblock {\em Journal of the Mathematical Society of Japan}, 30(1):139--167,
  1978.

\bibitem{stark1}
Harold~M. Stark.
\newblock Values of {L}-functions at $s=1$. {I}. {L}-functions for quadratic
  forms.
\newblock {\em Advances in Mathematics}, 7(3):301--343, 1971.

\bibitem{stark2}
Harold~M. Stark.
\newblock L-functions at $s=1$. {II}. {A}rtin {L}-functions with rational
  characters.
\newblock {\em Advances in Mathematics}, 17(1):60--92, 1975.

\bibitem{stark3}
Harold~M. Stark.
\newblock L-functions at $s=1$. {III}. {T}otally real fields and {H}ilbert's
  twelfth problem.
\newblock {\em Advances in Mathematics}, 22(1):64--84, 1976.

\bibitem{starkproc}
Harold~M. Stark.
\newblock Class fields for real quadratic fields and {$L$}-series at 1.
\newblock In {\em Algebraic Number Fields}, pages 355--374. Academic Press, New
  York, 1977.

\bibitem{stark4}
Harold~M. Stark.
\newblock L-functions at $s=1$. {IV}. {F}irst derivatives at $s=0$.
\newblock {\em Advances in Mathematics}, 35(3):197--235, 1980.

\bibitem{power}
Anna Szymusiak and Wojciech S{\l}omczy{\'n}ski.
\newblock Informational power of the {H}oggar symmetric informationally
  complete positive operator-valued measure.
\newblock {\em Physical Review A}, 94(1):012122, 2016.

\bibitem{zauner}
Gerhard Zauner.
\newblock {\em Quantendesigns: {G}rundz\"{u}ge einer nichtkommutativen
  {D}esigntheorie}.
\newblock PhD thesis, University of Vienna, Vienna, Austria, 1999.

\bibitem{zaunertrans}
Gerhard Zauner.
\newblock Quantum designs: {F}oundations of a noncommutative design theory.
\newblock {\em International Journal of Quantum Information}, 9(1):445--507,
  2011.

\bibitem{primezhu}
Huangjun Zhu.
\newblock {SIC POVMs} and {C}lifford groups in prime dimensions.
\newblock {\em Journal of Physics A: Mathematical and Theoretical},
  43(30):305305, 2010.

\end{thebibliography}


\end{document}